\numberwithin{equation}{section}
\newtheorem{theorem}{\bf Theorem}[section]
\newtheorem{corollary}[theorem]{\bf Corollary}
\newtheorem{proposition}[theorem]{\bf Proposition}
\newtheorem{lemma}[theorem]{\bf Lemma}
\newtheorem{definition}[theorem]{\bf Definition}
\newtheorem{example}[theorem]{\bf Example}
\newtheorem{definition-theorem}[theorem]{\bf Theorem-Definition}
\newtheorem{remark}[theorem]{\bf Remark}
\def\bR{\mathbb{R}}
\def\bC{\mathbb{C}}
\def\bZ{\mathbb{Z}}
\def\bP{\mathbb{P}}
\def\t{\mathfrak{t}}
\def\g{\mathfrak{g}}
 \def\k{\mathfrak{k}}
\def\quott({/\! /}
\def\g{\mathfrak{g}}
\def\t{\mathfrak{t}}
\def\k{\mathfrak{k}}
\def\q{\mathfrak{q}}
\def\h{\mathfrak{h}}
\def\A{{\mathcal A}}
\title{Assignments for topological group actions}
\begin{document}


\vspace{0.5cm}

\maketitle

\begin{center}
{\small 
\begin{tabular}{ll }
{\sc Oliver Goertsches } & {\sc  Augustin-Liviu Mare} \\
Fachbereich Mathematik und Informatik $ \ \ $ & Dept.~of Mathematics and Statistics  \\
der Philipps-Universit\"at Marburg & University of
Regina \\
D-35032 Marburg, Germany & Regina SK, S4S 0A2 Canada\\
{\tt  goertsch@mathematik.uni-marburg.de} & {\tt mareal@math.uregina.ca}
 \end{tabular}
}

\end{center}

\vspace{0.3cm}

\begin{abstract} A polynomial assignment for a continuous action of a compact torus $T$ on a topological
space $X$ assigns to each $p\in X$ a polynomial function on the Lie algebra
of the isotropy group at $p$ in such a way that  a certain compatibility condition is satisfied.
The space $\A_T(X)$ of all polynomial assignments  has a natural  structure of an algebra over the polynomial ring
of ${\rm Lie}(T)$. It is an equivariant homotopy invariant, canonically related to the
equivariant cohomology algebra. In this paper we prove various properties of $\A_T(X)$ such as 
Borel localization, a Chang-Skjelbred lemma,  and a Goresky-Kottwitz-MacPherson presentation.
In the special case of Hamiltonian torus actions on symplectic manifolds we prove a
surjectivity criterion for the assignment equivariant Kirwan map
corresponding to a circle in $T$. We then obtain a Tolman-Weitsman type presentation  
 of the kernel of this map. 
 
${}$

 \noindent MSC: 55P91, 55N91, 53D20
 
 \end{abstract}


\section{Introduction}\label{sec:1}
The notion of assignment   associated to a torus action on a manifold 
was defined by Ginzburg, Guillemin, and Karshon in \cite{GGK}, by means of
a construction that takes into account exclusively the orbit stratification and the relative position of the strata.   
They were led to this construction while dealing with the existence problem of an abstract 
moment map for a given action. However, as the authors briefly mention, this new notion
is susceptible to be relevant for another important question in this area, namely,
under which circumstances is the equivariant cohomology algebra determined by the orbit stratification? 
 Indeed, a few years later,  Guillemin, Sabatini, and Zara 
have found in \cite{GSZ} a direct connection between  the equivariant cohomology and 
 a particular assignment
 space, which is called by  them the algebra of {\it polynomial assignments}.
Concretely, the connection is given by a ring homomorphism,
which, for certain classes of actions, is injective  and sometimes even bijective. 
For example, injectivity is achieved for equivariantly formal actions with isolated
fixed points on compact manifolds and bijectivity  for the sub-class of actions of Goresky-Kottwitz-MacPherson (GKM) type. 

This paper is based on the observation that the polynomial assignment algebra 
can be defined for arbitrary continuous (torus) actions on topological spaces. 
More precisely, let $X$ be a topological space and $T$ a torus
that acts on it. For any $p\in X$ we denote by $T_p$ the corresponding isotropy subgroup of $T$
and by $\t_p$ its Lie algebra (this will be referred to as the infinitesimal isotropy at $p$). 
  { Let also  $S(\t_p^*)$ be the algebra of polynomial functions on $\t_p$. 
\begin{definition}\label{defiggk} A {\rm (polynomial) assignment} for the $T$-action on $X$ is
a map $A$ that assigns to each $p\in X$ a polynomial $A(p)\in S(\t_p^*)$ such that for any (connected) subtorus $H\subset T$ the map $A^\h$ on the fixed point set $X^H$ is locally constant. Here $\h$ is the Lie algebra of $H$ and $A^\h$ the map defined by $A^\h(p) :=A(p)|_{\h}$,
for all $p\in X^H$.   
\end{definition}
}

This looks different from the definition in \cite{GSZ}
since, 
as already mentioned, the latter involves the orbit stratification.  
    However, we will show in   Section \ref{sec:smooth} below that for smooth actions on manifolds,
    there is no difference between the two notions.

We denote by $\A_T(X)$ the space of all assignments of the above type. It has an obvious canonical structure of an $S(\t^*)$-algebra, which will be referred to as the {\it assignment algebra} of the torus action. It defines a functor from the category of topological $T$-spaces 
to the category of $S(\t^*)$-algebras; moreover, it is an equivariant homotopy invariant,
see Section  \ref{basic} below.  Our goal here is to present some results concerning $\A_T(X)$
in the topological set-up. Direct connections with the equivariant cohomology algebra $H^*_T(X)$ in the spirit of \cite{GSZ} are also discussed, although they are not of main interest for us.
Polynomial assignments are studied here in their own right.

In fact,  equivariant cohomology is rather relevant for us in an indirect way:
 that is, we consider some results in this theory 
 and prove assignment versions of them.
 In the first part we will consider the inclusion of the 
 fixed point set $X^T$ into $X$ 
 along with the  map $\A_T(X)\to \A_T(X^T)$ induced by functoriality.
 After proving Borel type localization results, concerning the
 kernel and the cokernel of the aforementioned map, we 
 obtain an assignment version of the  GKM-theorem.
 It requires some extra assumptions on the
 (continuous) torus action. 
 Among others, we want the fixed point set $X^T$ to have only finitely
 many components, call them $Z_1,\ldots, Z_n$.
 Then the theorem says that $\A_T(X)$ is isomorphic to the subspace of 
 $S(\t^*) \times \cdots \times S(\t^*)$ ($n$ factors)
 consisting of tuples $(f_1,\ldots, f_n)$ with the property that 
 if $Z_i$ and $Z_j$ are contained in a connected component of some $X^H$, where
 $H\subset T$ is a codimension one subtorus of Lie algebra $\h$,
 then $f_i$ and $f_j$ are equal when restricted to $\h$.
 The precise statement can be found in Section \ref{subsec:gkm}.
 We emphasize that the result is purely topological. 
 One class of torus actions for which it holds true
 is the one of  equivariantly formal actions on compact Hausdorff spaces 
 with finitely many infinitesimal isotropies and finite dimensional cohomology. 
  { We note that this is in the spirit of \cite[Section 3​.4]{GGK}: a characterization of  
  ${\mathcal A}_T(X)$ similar to the one above is obtained there under the (more restrictive) hypotheses that 
  $X$ is a manifold, the $T$-action is smooth, and the compatibility relations are assumed for subtori  $H$  of arbitrary dimension.}

 In the last section of the paper we consider the particular context of Hamiltonian torus actions
 on compact symplectic manifolds, which are prominent examples of equivariantly formal,
 in general non-GKM, actions. 
More precisely, we study  the assignment version of the
 equivariant Kirwan map. Recall that the Kirwan map is a basic instrument 
 when dealing with the cohomology
  of symplectic quotients.   There is a rich literature devoted to this topic.
 We only mention here the seminal work \cite{Ki} of Kirwan and 
 the influential papers \cite{Gol} by Goldin  and \cite{To-We} by Tolman and Weitsman,
 which are directly related to our interests. To state our result, let us denote by 
 $T$ the torus which acts and by $M$ the compact symplectic manifold which is acted on.
 Let also $Q\subset T$ be { a  one-dimensional subtorus with Lie algebra $\q$}
  and $\Phi : M \to \q^*$ the moment map of the restricted $Q$-action.
  If $\mu \in \q^*$ is a regular value of $\Phi$,  the symplectic quotient 
  $M_0:=\Phi^{-1}(\mu)/Q$ is a symplectic orbifold with a  canonical action of $T/Q$.
  We first show that there is a well-defined {\it equivariant assignment Kirwan map}
  $\kappa^A:\A_T(M) \to \A_{T/Q}(M_0)$. In the case when $M_0$ is a manifold, 
  this was already noticed in \cite{GSZ}. 
   Unlike its cohomological counterpart, 
  $\kappa^A$ is in general not surjective. We prove the following surjectivity criterion. 
  Assume that for any connected component $F$ of $M^T$, the weights of the 
  isotropy representation along $F$ are linearly independent modulo collinearity
  (more precisely, after setting equal any two weights which are collinear, the resulting
  set must be linearly independent). Then $\kappa^A$ is surjective. 
  The proof uses ideas from  Morse theory for the moment map function $\Phi$, 
  which are applicable mainly due to the fact that $\A_T$ is a topological, equivariant homotopy invariant.   
  We also achieve   
  a description of the kernel of $\kappa^A$, which is the
  assignment version of  a result previously obtained in cohomology by Goldin  \cite{Gol}.  
  As a consequence, concrete formulas for $\A_{T/Q}(M_0)$ become available. 
  The details can be found in Section \ref{sec:kirwan}.

 {\bf Acknowledgements.} We wish to thank Silvia Sabatini for helpful comments.
 { We also thank the referee for carefully reading the manuscript and suggesting several improvements.}

\section{Basic facts}\label{basic}

We start with a list of examples. The importance of the first one is rather historical. 
It is only intended to remind that 
originally assignments arose from the study of moment maps, see \cite{GGK}. 

\begin{example}  {\rm Let { $(M,\omega)$} be a symplectic manifold endowed with a Hamiltonian action of a 
torus $T$. If $\Phi : M \to \t^*$ is a moment map, then $A(p):=\Phi(p)|_{\t_p}$, $p\in M$,
is a polynomial assignment. }
\end{example}

In the following situations the assignment algebra $\A_T(X)$ can be obtained by
direct calculations.  

\begin{example}\label{leer} {\rm  If  $X$  is the empty space acted on by a torus $T$, 
then there exists a unique map from  $X$  to  $S(\t^*)$.
This map does not change after multiplication with $0 \in S(\t^*)$.
Thus, $\A_T(\emptyset) = \{0\}$.}
\end{example}

\begin{example}\label{ex:trivial}  {\rm In the case when a torus $T$ acts trivially on a non-empty connected space $X$, then 
$\t_p=\t$ for all $p\in X$, hence any assignment is constant on $X$. This means that $\A_T(X) =S(\t^*)$. } 
\end{example}

\begin{example}\label{ex:weight} {\rm Let  $m, n>0$ be two integers. The weighted complex projective plane 
$\bP (m, n)$ is defined as the quotient $\left(\bC^2\setminus \{(0,0)\}\right)/_\sim$, where
$$(z_0, z_1) \sim (\lambda^m z_0, \lambda^n z_1), \quad (z_0, z_1) \in \bC^2\setminus \{(0,0)\}, \lambda \in S^1.$$ 
We consider the  action of $S^1$ on $\bP(m,n)$, given by
$$z.[z_0:z_1]:=[z^kz_0:z_1], \quad z\in S^1,$$
where $k>0$ is an integer.  The Lie algebra of $S^1$ is $\bR$.   The action has two fixed points, $p_1:=[1:0]$ and $p_2:=[0:1]$. The infinitesimal isotropy at any other point is $\{0\}$. 
In this case, one can identify the symmetric algebra  $S(\t^*)$  with the polynomial ring $\bR[u]$. 
The assignments for our action are of the form
$$A(p)=\begin{cases}
f_1, \  {\rm if } \ p =p_1,\\
f_2, \ {\rm if } \ p = p_2,\\
r, \  {\rm if} \ p \notin\{ p_1, p_2\},
\end{cases}
$$
where  $f_1, f_2\in \bR[u]$ and $r\in \bR$ are such that $f_1(0)=f_2(0)=r$.
}
\end{example}

\begin{example}\label{modif} {\rm  Let $n\ge 1$ be an integer number and  $\xi_1, \ldots, \xi_n\in \bC$ the $n$-th roots of unity.
   Let $X$ be the quotient space $S^1 \times S^1 / \{(\xi_i, z) \sim (\xi_i, z') \ {\rm for \ all \ } i=1, \ldots, n \ 
   {\rm and \ all \ } z, z' \in S^1\}$. The action of $S^1$ on 
   $S^1 \times S^1$ given by $z.(x_1, x_2):=(x_1, zx_2)$ descends to $X$ and has $n$ fixed points, 
   $p_1, \ldots, p_n$, which are represented by $\{\xi_1\}\times S^1, \ldots, \{\xi_n\}\times S^1$, respectively.
   At any $p \in X$ which is not fixed by $S^1$, the isotropy is $\{1\}$. Hence the assignments are this time 
   $$A(p)=\begin{cases}
f_i, \  {\rm if } \ p =p_i \ {\rm for \ some \ } 1\le i \le n\\
r, \  {\rm if} \ p \neq p_i  \ {\rm for \ all \ } 1\le i \le n,
\end{cases}
$$ where $f_1,\ldots, f_n \in \bR[u]$ and $r\in \bR$ are such that $f_1(0) =\ldots = f_n(0) = r$. This is a slightly modified version of an example from
\cite{Ty} (see Figure 4 in that paper).}

\end{example}

\begin{example}\label{sphere} {\rm The torus $T:=S^1\times S^1$ acts on the sphere
$S^3:=\{(x_1, x_2)\in \bC^2 \mid |x_1|^2+|x_2|^2=1\}$ as follows:
$$(z_1, z_2).(x_1, x_2):=(z_1x_1, z_2 x_2).$$
It is an easy exercise to show that, after identifying $S(\t^*)=\bR[u_1, u_2]$, the assignments of this action are of the following form:
$$A((x_1,x_2))=\begin{cases}
f_1, \  {\rm if} \ x_1 =0,\\
f_2, \  {\rm if } \ x_2 =0,\\
r, \ {\rm if} \ x_1 \neq 0 \ {\rm and} \ x_2 \neq 0,
\end{cases}
$$ 
where $f_1, f_2 \in \bR[u]$ and $r\in \bR$ are such that $f_1(0)=f_2(0)=r$.}
\end{example}


In what follows we will show that the assignment algebra of a torus action
 shares with equivariant cohomology two important properties:
homotopy invariance and (a weak version of)
the Mayer-Vietoris sequence.

A $T$-equivariant map $f: X\to X'$ between two $T$-spaces $X$ and $X'$ induces an $S(\t^*)$-algebra homomorphism $f^*:\A_T(X')\to \A_T(X)$ between the corresponding assignment algebras: $$f^*(A)(p):= A(f(p))|_{\t_p},
\ {\rm  for \  all \ }  \ p\in X.$$ 
The map $f^*(A)$ satisfies Definition \ref{defiggk}.   
To verify this, take $H\subset T$ and $Y$ a component of $X^H$. 
Then $f(Y)$ is contained in a component of $(X')^H$, hence $A(f(p))|_\h$ is independent of $p \in Y$.

\begin{proposition}\label{homotopic} Let $f,g:X \to X'$ be two $T$-equivariant maps which are homotopic to each other through $T$-equivariant maps. Then $f^*=g^*: \A_T(X')\to \A_T(X)$.
\end{proposition}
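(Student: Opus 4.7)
The plan is to use the equivariant homotopy $h: X \times [0,1] \to X'$ between $f$ and $g$ (with $T$ acting trivially on $[0,1]$) to build, for each $p \in X$, a path inside a fixed point set of an appropriate subtorus of $T$ connecting $f(p)$ to $g(p)$, and then invoke the locally-constant condition from Definition \ref{defiggk}.

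First, I would fix $p \in X$ and an assignment $A \in \A_T(X')$. The goal is to check that $A(f(p))|_{\t_p} = A(g(p))|_{\t_p}$, which is exactly what it means for $f^*(A)(p)$ and $g^*(A)(p)$ to agree. Consider the continuous path $\gamma : [0,1] \to X'$ defined by $\gamma(t) := h(p,t)$; then $\gamma(0) = f(p)$ and $\gamma(1) = g(p)$. Because $h$ is $T$-equivariant and $T$ acts trivially on $[0,1]$, for every $s \in T_p$ we have $s \cdot \gamma(t) = s \cdot h(p,t) = h(s \cdot p, t) = h(p,t) = \gamma(t)$, so $\gamma$ takes values in $(X')^{T_p}$.

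Next, since Definition \ref{defiggk} demands the compatibility condition only for \emph{connected} subtori, I would pass to the identity component $H \subset T_p$. It is a connected subtorus of $T$, and its Lie algebra is still $\h = \t_p$. Moreover $(X')^{T_p} \subset (X')^H$, so the path $\gamma$ lies entirely in $(X')^H$ and connects $f(p)$ to $g(p)$ there. The assignment property says that $A^\h : (X')^H \to S(\h^*)$ is locally constant, hence constant on connected components; composing with $\gamma$ shows
\[
A(f(p))|_{\t_p} = A^\h(\gamma(0)) = A^\h(\gamma(1)) = A(g(p))|_{\t_p},
\]
which is the desired equality $f^*(A)(p) = g^*(A)(p)$. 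Since $p$ was arbitrary, $f^* = g^*$.

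The only subtle point I expect is the one I flagged above: the assignment definition is phrased for connected subtori while the natural subgroup to consider, $T_p$, may fail to be connected. Replacing $T_p$ by its identity component is what makes the argument go through, and this replacement is harmless because the Lie algebra $\t_p$, which controls the restriction appearing in the definition of $f^*$, is unaffected.
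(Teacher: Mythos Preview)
Your proof is correct and follows essentially the same route as the paper: build the path $t\mapsto h(p,t)$, observe by equivariance that it lies in $(X')^{T_p}\subset (X')^H$ where $H$ is the identity component of $T_p$, and invoke the locally-constant condition in Definition~\ref{defiggk}. The paper's argument is identical, including the passage to the identity component that you flagged.
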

\begin{proof}
Let $F:[0,1]\times X\to X'$ be a $T$-equivariant homotopy from $f$ to $g$. For every assignment $A\in \A_T(X')$ and every point $p\in X$ we have to show that $f^*(A)(p)=g^*(A)(p)$, i.e., that $A(f(p))|_{\t_p} = A(g(p))|_{\t_p}$. But by equivariance, $T_p\subset T_{F(t,p)}$ for all $t\in [0,1]$, hence the curve $t\mapsto F(t,p)$ lies completely in a connected component of $(X')^H$, where $H$ is the identity component of $T_p$. The claim thus follows 
from  { Definition \ref{defiggk}}.
\end{proof}

If $X$ is a $T$-space and $Y\subset X$ a $T$-invariant subspace, there is a {\it restriction map}
$\A_T(X)\to \A_T(Y)$, $A\mapsto A|_Y$,  which is the algebra homomorphism  induced by  the inclusion $Y\hookrightarrow X$.  

\begin{proposition}\label{mv} Let $X$ be a $T$-space and $Y, Z\subset X$ two $T$-invariant 
 subspaces, which are either both open or both closed. 
Then the following sequence is exact:
$$0\longrightarrow \A_T(Y\cup Z) \longrightarrow \A_T(Y)\oplus \A_T(Z) \longrightarrow \A_T(Y\cap Z).$$
Here $A_T(Y\cup Z) \to \A_T(Y)\oplus \A_T(Z)$ is given by $A\mapsto (A|_Y, A|_Z)$ and
$\A_T(Y)\oplus \A_T(Z) \to \A_T(Y\cap Z)$ by $(A,B)\mapsto A|_{Y\cap Z}-B|_{Y\cap Z}$. 
\end{proposition}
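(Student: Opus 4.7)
The plan is to verify the three standard exactness conditions, with the one nontrivial point being the check that the assignment produced by gluing two compatible assignments satisfies the local constancy condition of Definition \ref{defiggk}.

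First I would dispose of the easy parts. Injectivity of $A \mapsto (A|_Y, A|_Z)$ is immediate: every $p \in Y \cup Z$ lies in $Y$ or $Z$, so $A|_Y = 0$ and $A|_Z = 0$ force $A = 0$. The composition of the two maps is zero because $(A|_Y)|_{Y \cap Z} = A|_{Y \cap Z} = (A|_Z)|_{Y \cap Z}$.

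The substantive step is showing that any pair $(A,B) \in \A_T(Y) \oplus \A_T(Z)$ with $A|_{Y \cap Z} = B|_{Y \cap Z}$ comes from a single assignment $C \in \A_T(Y \cup Z)$. I would set $C(p) := A(p)$ for $p \in Y$ and $C(p) := B(p)$ for $p \in Z$; this is well-defined on $Y \cap Z$ by the compatibility hypothesis, and tautologically $C(p) \in S(\t_p^*)$. The issue is to show that for every connected subtorus $H \subset T$ with Lie algebra $\h$, the map $C^\h$ is locally constant on $(Y \cup Z)^H = Y^H \cup Z^H$. Fix $p \in (Y \cup Z)^H$; I may assume $p \in Y^H$, so that $C^\h(p) = A(p)|_\h =: a$.

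The hypothesis that $Y,Z$ are simultaneously open or simultaneously closed in $X$ translates, via intersection with the subspace $(Y \cup Z)^H$, into the statement that $Y^H$ and $Z^H$ are both open or both closed in $(Y \cup Z)^H$. In the \emph{open} case, $A^\h$ locally constant on $Y^H$ gives an open neighbourhood of $p$ inside $Y^H$ on which $A^\h \equiv a$, and since $Y^H$ is open in $(Y \cup Z)^H$ this is already an open neighbourhood of $p$ there on which $C^\h \equiv a$. In the \emph{closed} case, I would split into two subcases. If $p \notin Z^H$, then $(Y \cup Z)^H \setminus Z^H$ is an open neighbourhood of $p$ entirely contained in $Y^H$, so intersecting it with an open set in $Y^H$ on which $A^\h \equiv a$ produces an open neighbourhood of $p$ in $(Y \cup Z)^H$ on which $C^\h \equiv a$. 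If instead $p \in Y^H \cap Z^H = (Y \cap Z)^H$, then the compatibility $A|_{Y \cap Z} = B|_{Y \cap Z}$ gives $B(p)|_\h = a$, and I would intersect an open set $V_1 \subset (Y \cup Z)^H$ with $A^\h \equiv a$ on $V_1 \cap Y^H$ with an open set $V_2$ with $B^\h \equiv a$ on $V_2 \cap Z^H$; on $V_1 \cap V_2$, every point of $Y^H$ sees $C^\h = A^\h = a$ and every point of $Z^H$ sees $C^\h = B^\h = a$.

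The main obstacle is exactly this last verification in the closed case, where $Y^H$ need not be open in $(Y \cup Z)^H$ and one cannot simply transport the local constancy of $A^\h$. The trick is that closedness of $Z^H$ together with the compatibility of $A$ and $B$ on the overlap $(Y \cap Z)^H$ is precisely what is needed to patch the two locally constant data consistently.
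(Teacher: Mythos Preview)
Your argument is correct and follows essentially the same route as the paper's proof: both reduce to checking local constancy of the glued map on $(Y\cup Z)^H=Y^H\cup Z^H$, handle the open case by noting that $Y^H$ is already open in $(Y\cup Z)^H$, and in the closed case split according to whether $p\in Z^H$ or not, using closedness of $Z^H$ to isolate $p$ in the first subcase and intersecting two neighbourhoods in the second. The only cosmetic difference is that the paper phrases the neighbourhoods as open subsets of the ambient $X$, whereas you work directly in $(Y\cup Z)^H$; these are equivalent via the subspace topology.
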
  

\begin{proof}
The only nontrivial statement is that if $B\in \A_T(Y)$ and $C\in \A_T(Z)$ are such that
$B|_{Y\cap Z}=C|_{Y\cap Z}$ then there exists $A\in \A_T(Y\cup Z)$ such that
$A|_Y=B$ and $A|_Z=C$. Observe first that the last two equations define $A$ uniquely, as a map
$Y\cup Z \ni p \mapsto A(p)\in S(\t^*_p)$. It remains to show that $A$ is an assignment on $Y\cup Z$: that is, if $H\subset T$ is a subtorus  then $A^\h$ is
locally constant on $(Y\cup Z)^H=Y^H\cup Z^H$.

{\it Case 1.} Both $Y$ and $Z$ are open in $X$. Take $p\in Y^H \cup Z^H$, 
say $p\in Y^H$. Then $B^\h$ is constant on an open neighborhood of $p$ in $Y^H$,
which is also a neighborhood of $p$ in $Y^H\cup Z^H$, since $Y^H$ is open in the latter union. 
Finally, by definition, $A^\h$ and $B^\h$ coincide on that neighborhood. 

{\it Case 2.} Both $Y$ and $Z$ are closed in $X$. Again we take $p\in Y^H \cup Z^H$.
If $p \in Y^H \setminus Z^H$, there exists an open neighborhood $U$ of $p$ in $X$
such that $B^\h$ is constant on $U\cap Y^H$ and $U\cap Z^H=\emptyset$. 
But then $U\cap (Y^H\cup Z^H) = U\cap Y^H$ and on this set $B^\h$ equals $A^\h$,
hence the latter is constant as well. 
If $p\in Y^H\cap Z^H$, there exist open neighbourhoods $U_1$ and $U_2$ of $p$ in $X$
such that $B^\h$ is constant on $U_1\cap Y^H$ and $C^\h$ is constant on $U_2\cap Z^H$.
It follows that $A^\h$ is constant on $(U_1\cap U_2)\cap(Y^H\cup Z^H)$.   
\end{proof}

\section{Assignments for smooth group actions}\label{sec:smooth}

In this section $X$ is assumed to be a manifold and the $T$-action smooth. 
Polynomial assignments for such actions have been defined in \cite{GSZ}  as follows (cf.~also \cite[Definition 3.7]{GGK}).
One first considers the corresponding infinitesimal orbit-type stratification of $X$. 
That is, the strata are connected components of spaces of the form  $Y:=\{p\in X \mid \t_p=\h\}$,
where $\h$ is an infinitesimal isotropy. Let us denote $\t_Y:=\t_p$, where $p\in Y$.
There is a partial order $\preceq$ on the set of all strata given by 
$Y\preceq Z$ if and only if $Y\subset \overline{Z}$. Note that the last condition implies
$\t_Z \subset \t_Y$.  

\begin{definition} {\rm (\cite[Definition 2.1]{GSZ})} \label{defigsz} A {\rm polynomial assignment} for the $T$-action on $X$ is a function $A$ that associates
to each infinitesimal stratum $Y$ a polynomial $A(Y)\in S(\t_Y^*)$ such that if $Y\preceq Z$ then $A(Z)=A(Y)|_{\t_Z}$.    
\end{definition}

The following proposition says that  Definitions \ref{defiggk} and \ref{defigsz} are equivalent.

\begin{proposition}\label{equivdefi} (a) If $A$ is like in Definition \ref{defiggk} then $A$ is constant on each infinitesimal 
stratum $Y$. Let $A'(Y)$ denote the common value of all $A(p)$, $p\in Y$. Then the map $Y\mapsto A'(Y)$ satisfies
the requirement of Definition \ref{defigsz}. 

(b) If $A'$ is like in Definition \ref{defigsz} then the map $A$ given by $A(p):=A'(Y)$, where $p$ is in the stratum $Y$,
satisfies the { requirement} of Definition \ref{defiggk}.  
\end{proposition}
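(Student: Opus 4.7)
The plan is to translate between the ``pointwise'' Definition \ref{defiggk} and the ``stratumwise'' Definition \ref{defigsz}, relying on two standard facts about smooth torus actions on manifolds: for any closed subtorus $K \subset T$ the fixed set $X^K$ is a closed submanifold of $X$, so its connected components are open in $X^K$ (hence ``locally constant on $X^K$'' coincides with ``constant on each component''); and the infinitesimal orbit-type stratification satisfies the axiom of frontier, i.e.\ $Y \cap \overline{Z} \neq \emptyset$ implies $Y \subset \overline{Z}$.

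For part (a), given $A$ as in Definition \ref{defiggk}, I first show $A$ is constant on every stratum $Y$. Setting $\h := \t_Y$ and letting $H$ be the corresponding connected subtorus, the stratum $Y$ is a connected subset of $X^H$, and on $Y$ one has $A^\h(p) = A(p)|_\h = A(p)$ because $\t_p = \h$. Local constancy of $A^\h$ on $X^H$ together with connectedness of $Y$ forces $A$ to be constant on $Y$, producing a well-defined value $A'(Y) \in S(\h^*)$. For the compatibility, assume $Y \preceq Z$, set $\k := \t_Z \subset \t_Y$, and let $K$ be the subtorus with Lie algebra $\k$; then $Y, Z \subset X^K$. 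Picking $p \in Y \subset \overline{Z}$ and a sequence $q_n \in Z$ with $q_n \to p$, the openness of the components of $X^K$ places $p$ and, eventually, all $q_n$ in the same component. Local constancy of $A^\k$ there gives $A(p)|_\k = A(q_n)|_\k = A(q_n)$ (the last because $A(q_n) \in S(\k^*)$), i.e.\ $A'(Y)|_{\t_Z} = A'(Z)$.

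For part (b), I define $A(p) := A'(Y)$ when $p \in Y$ and check that $A^\h$ is locally constant on $X^H$ for every connected subtorus $H$. Fix $p \in X^H$, let $Y$ be its stratum (so $\h \subset \t_Y$), and use the slice theorem to identify a neighborhood of $p$ with $T \times_{T_p} V$ for some $T_p$-representation $V$, with $p = [1, 0]$. A nearby point $q \in X^H$ is represented as $[t, v]$ with $v \in V^H$. Because the $T_p$-action on $V$ is linear, the isotropy of $[t, sv]$ equals $\mathrm{Stab}_{T_p}(v)$ for every $s > 0$; hence the path $s \mapsto [t, sv]$, $s \in (0, 1]$, lies in the stratum $Z$ of $q$ and limits to $[t, 0] = t \cdot p \in Tp \subset Y$. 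Thus $Y \cap \overline{Z} \neq \emptyset$, and the axiom of frontier upgrades this to $Y \subset \overline{Z}$, i.e.\ $Y \preceq Z$. The compatibility of Definition \ref{defigsz} now yields
\[
A^\h(q) = A'(Z)|_\h = (A'(Y)|_{\t_Z})|_\h = A'(Y)|_\h = A^\h(p),
\]
which is the required local constancy.

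The main obstacle is the axiom-of-frontier step in (b): the slice construction directly produces only the local incidence $[t, 0] \in Y \cap \overline{Z}$, whereas Definition \ref{defigsz} is phrased using the global relation $Y \subset \overline{Z}$. This promotion is classical for orbit-type stratifications of smooth compact-group actions and I would simply cite it; an alternative is to reformulate Definition \ref{defigsz}'s compatibility in its local form and bypass the upgrade altogether.
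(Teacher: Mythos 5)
Your argument is correct and follows essentially the same route as the paper: both reduce part (b) to the slice model $T\times_{T_p}\nu_p$, use that scaling by $s>0$ preserves the infinitesimal stratum to place $p$ (or $tp$) in $\overline{Z}$, and then pass from this local incidence to $Y\preceq Z$ via the frontier property, while part (a) is in both cases just local constancy of $A^{\t_Z}$ on a connected subset of $X^{T_Z}$ containing $Y$ and $Z$. The obstacle you flag at the end is in fact shared by the paper's own proof, which writes ``Hence the whole stratum of $p$ is contained in $\overline{Y}$'' without further justification, so citing the standard frontier property of orbit-type stratifications is precisely the right move and your proof is complete.
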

 
\begin{proof} (a) If $Y$ is a stratum, then obviously $\overline{Y}\subset X^{T_Y}$, where $T_Y\subset T$ is the connected Lie subgroup corresponding to $\t_Y$.  
This implies the claim.

(b)  Let $H\subset T$ be a subtorus and $p\in X^H$. 
Let $U$ be a tubular neighborhood  around $Tp$, i.e., an open neighborhood of $Tp$ which is $T$-equivariantly diffeomorphic to
$T\times_{T_p}\nu_p$, where $\nu_p$ is the normal space to $Tp$ at $p$. 

We claim that $A(q)|_\h$ is independent of  $q\in U\cap X^H$ (this will imply that
$A^\h$ is locally constant on $X^H$).
To prove this, we may assume that $q\in \nu_p$  and $q\neq 0$.
Since the $T_p$-action on $\nu_p$ is linear, the infinitesimal stratum of $q$ contains the half-line { $\{xq\mid x>0\}$} in $\nu_p$.
Denote the  stratum of $q$ by $Y$.
{ Since the half-line mentioned above is contained in $Y$,} we deduce  that $p\in \overline{Y}$. Hence the whole stratum of $p$ is contained in $\overline{Y}$,
thus  $A(q)=A(p)|_{\t_q}$.
But $q\in X^H$, which implies that $\h\subset \t_q\subset \t_p$ and further that $A(q)|_\h=A(p)|_\h$.
\end{proof}

\section{A Borel type localization theorem}

 Let $X$ be a connected topological space acted on by a torus $T$.
We assume throughout this section that the following assumption is  fulfilled.

\noindent{\bf Assumption 1.} The $T$-action on $X$ has only finitely many infinitesimal isotropies.

Recall that $\A_T(X)$ has a canonical structure of an $S(\t^*)$-algebra. 
The goal here is to prove an analogue of  Borel's localization theorem for equivariant cohomology,
see for instance \cite[Theorem C.20]{GGK1}.

Recall from Section \ref{basic} that if $Y$ is a $T$-invariant subspace of $X$, then there is a natural restriction map $\A_T(X)\to \A_T(Y)$.

\begin{proposition}\label{ttx} If the $T$-action on $X$ satisfies Assumption 1, then  the kernel of the
restriction map $r:\A_T(X)\to \A_T(X^T)$ is the $S(\t^*)$-torsion submodule of $\A_T(X)$.\end{proposition}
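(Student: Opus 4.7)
The plan is to prove both inclusions, the easy one being that every $S(\t^*)$-torsion element lies in $\ker r$, and the substantive one being that every element of $\ker r$ is torsion. For the first direction, suppose $A\in \A_T(X)$ is annihilated by some nonzero $f\in S(\t^*)$. For any $p\in X^T$, connectedness of $T$ forces $T_p=T$ and hence $\t_p=\t$, so the identity
\[
(fA)(p) \;=\; f|_{\t_p}\cdot A(p) \;=\; f\cdot A(p) \;=\; 0
\]
takes place in the integral domain $S(\t^*)$; since $f\neq 0$ this forces $A(p)=0$. Thus $r(A)=0$.

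For the reverse inclusion, the idea is to use Assumption 1 to build a single nonzero element of $S(\t^*)$ that annihilates every element of $\ker r$. Let $\h_1,\ldots,\h_k$ be the infinitesimal isotropies of the $T$-action on $X$ that are proper subalgebras of $\t$; by Assumption 1 this list is finite. For each $i$, since $\h_i\subsetneq \t$ the annihilator of $\h_i$ in $\t^*$ is nontrivial, so I can pick a nonzero linear form $f_i\in \t^*\subset S(\t^*)$ with $f_i|_{\h_i}=0$. I then set
\[
f \;:=\; f_1 f_2\cdots f_k \;\in\; S(\t^*),
\]
a nonzero element since $S(\t^*)$ is a domain. (The edge case $k=0$ occurs exactly when $X^T=X$; then $r$ is the identity and the statement is vacuous, so I take $f=1$.)

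The claim to verify is that $fA=0$ for every $A\in \ker r$. Fix $p\in X$. If $p\in X^T$, then $A(p)=0$ from $r(A)=0$, so $(fA)(p)=f|_{\t_p}\cdot A(p)=0$ trivially. Otherwise $\t_p$ is a proper subalgebra of $\t$, hence $\t_p=\h_i$ for some $i\in\{1,\ldots,k\}$; then $f_i|_{\t_p}=0$ forces $f|_{\t_p}=0$, whence $(fA)(p)=0$ as well. Therefore $fA=0$ with $f\neq 0$, exhibiting $A$ as an $S(\t^*)$-torsion element.

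There is no serious obstacle here: the proof is essentially a packaging step. Assumption 1 is exactly the finiteness hypothesis needed to collapse the (a priori infinitely many) local annihilators, one per infinitesimal isotropy, into a single universal polynomial in $S(\t^*)$. The only point worth double-checking is the reduction $\t_p=\t\iff p\in X^T$, which uses connectedness of $T$ and closedness of $T_p$ in $T$.
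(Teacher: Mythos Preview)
Your proof is correct and follows essentially the same approach as the paper: for the nontrivial inclusion, both arguments use Assumption~1 to produce a single nonzero $f\in S(\t^*)$ vanishing on every proper infinitesimal isotropy, then check that $fA$ vanishes on $X^T$ (because $A$ does) and on $X\setminus X^T$ (because $f|_{\t_p}=0$ there). The only cosmetic difference is that the paper phrases the final step via injectivity of the restriction $\A_T(X)\to \A_T(X^T)\oplus \A_T(X\setminus X^T)$, whereas you verify $(fA)(p)=0$ pointwise; these are the same thing.
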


\begin{proof} 
We only need to show that any element in the kernel is $S(\t^*)$-torsion, since the other inclusion is obvious (note that $\A_T(X^T)$ is a free $S(\t^*)$-module, see Example 
\ref{ex:trivial}). Take $A\in \A_T(X)$ such that $r(A)=0$. Let  $\h_1,\ldots,\h_n$ be the infinitesimal isotropies of the $T$-action which are
different from $\t$. Pick a non-zero polynomial $f$ in $S(\t^*)$ 
which vanishes on $\h_1\cup \cdots \cup \h_n$. If one multiplies by $f$ any element of
$\A_T(X\setminus X^T)$ one obtains zero. Consider now the map $\A_T(X) \to \A_T(X^T)\oplus \A_T(X\setminus X^T)$,
which is the direct sum of the restriction maps. This map is  injective and maps $fA$ to zero. Thus, $fA=0$.
\end{proof}


\begin{corollary}\label{t-act} Assume that the  $T$-action on $X$ satisfies Assumption 1. Then the following assertions are equivalent:
\begin{itemize}
\item[(i)]  The set $X^T$ is not empty and the  restriction map $r: \A_T(X)\to \A_T(X^T)$ is injective.
\item[(ii)] The assignment algebra $\A_T(X)$ is $S(\t^*)$-torsion free.
\end{itemize}
\end{corollary}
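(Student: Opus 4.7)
The plan is to deduce the corollary almost immediately from Proposition \ref{ttx}, which identifies $\ker(r)$ with the $S(\t^*)$-torsion submodule of $\A_T(X)$. For the direction (i) $\Rightarrow$ (ii), injectivity of $r$ means $\ker(r)=0$; by Proposition \ref{ttx} this is precisely the torsion submodule, so $\A_T(X)$ is $S(\t^*)$-torsion free. Non-emptiness of $X^T$ plays no role in this direction, but is given as part of hypothesis (i).

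For the converse (ii) $\Rightarrow$ (i), torsion-freeness combined with Proposition \ref{ttx} forces $\ker(r)=0$, and hence $r$ is injective; the only nontrivial content left is to verify that $X^T\neq\emptyset$. I would argue by contradiction. If $X^T=\emptyset$, then Example \ref{leer} gives $\A_T(X^T)=\{0\}$, so $r$ is the zero map and $\ker(r)=\A_T(X)$. By Proposition \ref{ttx}, every element of $\A_T(X)$ would then be $S(\t^*)$-torsion. To contradict this, I would exhibit a concrete nonzero non-torsion element, namely the constant assignment $A(p):=1\in S(\t_p^*)$. It satisfies Definition \ref{defiggk} trivially, since for every subtorus $H\subset T$ the map $A^\h$ is identically $1$ on $X^H$; it is nonzero because $X$ is nonempty (being connected); and since $S(\t^*)$ is an integral domain, a nonzero element of a torsion-free $S(\t^*)$-module cannot be annihilated by any nonzero polynomial, contradicting the previous step.

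The main (and really only) obstacle is this edge case in the converse: one has to remember that $X^T$ could in principle be empty and rule this out by producing a nonzero non-torsion element. Once the constant-$1$ assignment is in hand as a witness of non-triviality, the whole argument reduces to a two-line bookkeeping from Proposition \ref{ttx}.
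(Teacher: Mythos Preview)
Your proof is correct and follows essentially the same route as the paper: both directions hinge on Proposition~\ref{ttx}, and the nonemptiness of $X^T$ is handled via Example~\ref{leer}. The only minor difference is that for (i)~$\Rightarrow$~(ii) the paper argues that $r$ embeds $\A_T(X)$ into the free module $\A_T(X^T)$, whereas you invoke Proposition~\ref{ttx} a second time; and for (ii)~$\Rightarrow$~(i) you spell out the constant-$1$ assignment explicitly where the paper simply gestures at Example~\ref{leer}.
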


\begin{proof} The implication ${\rm (ii)} \Rightarrow {\rm (i)}$   follows immediately from Proposition \ref{ttx}, see also Example \ref{leer}.
It remains to justify ${\rm (i)}\Rightarrow {\rm (ii)}$. But if $X^T\neq \emptyset$ and $r$ is injective, then  $\A_T(X)$ is a submodule of $\A_T(X^T)$; since the latter is free, the former is  torsion free.  
\end{proof}

 A class of examples which satisfy the two conditions in the corollary consists of smooth torus actions on compact smooth manifolds 
that are equivariantly formal (see Section \ref{cef} and Proposition \ref{propres} below). 

If $T$ is a circle and the two conditions in the corollary hold true,   then 
$\A_T(X)$ is not only torsion free, but also free, because it is a submodule of $\A_T(X^T)$, which is free,
and $S(\t^*)$ is a PID. In general, however, it is possible that $\A_T(X)$ is torsion free but not free: see Example \ref{ex:notfree}.


In the spirit of Borel's localization theorem for equivariant cohomology, not only the kernel of
$r:\A_T(X)\to \A_T(X^T)$ is $S(\t^*)$-torsion, but also its cokernel:

\begin{proposition}\label{coker} Consider an action of a torus $T$ on a topological space $X$ satisfying Assumption 1. Then the cokernel of the map $r:\A_T(X)\to \A_T(X^T)$ is $S(\t^*)$-torsion.
\end{proposition}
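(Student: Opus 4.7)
The plan is to use the same strategy as in Proposition 4.1: pick a single nonzero $f\in S(\t^*)$ that kills every assignment supported on $X\setminus X^T$, and then use it to ``extend by zero'' an arbitrary assignment on $X^T$ to one on $X$ after multiplying by $f$. Concretely, let $\h_1,\ldots,\h_n$ be the infinitesimal isotropies different from $\t$; choose, for each $i$, a linear form $\ell_i\in\t^*$ vanishing on $\h_i$, and set $f:=\ell_1\cdots\ell_n$. Then $f$ is nonzero (as the $\h_i$ are proper subspaces of $\t$) and $f|_\h=0$ whenever $\h$ is contained in some $\h_i$.

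Given $B\in\A_T(X^T)$, define a map $A$ on $X$ by
\[
A(p):=\begin{cases} fB(p) & \text{if } p\in X^T,\\ 0\in S(\t_p^*) & \text{if } p\notin X^T.\end{cases}
\]
The key step is to check that $A$ is an assignment, i.e.~that for every connected subtorus $H\subset T$ with Lie algebra $\h$, the map $A^\h$ is locally constant on $X^H$. I would fix $p\in X^H$ and split into two cases. If $p\in X^T$, use the local constancy of $B$ on $X^T$ to find a neighborhood $W$ of $p$ in $X$ with $B\equiv B(p)$ on $W\cap X^T$; then for any $q\in W\cap X^H$ that lies in $X^T$ one has $A(q)|_\h=f|_\h B(p)|_\h=A(p)|_\h$, while for any $q\in W\cap X^H$ not in $X^T$ one has $\h\subset \t_q=\h_i$ for some $i$, hence $f|_\h=0$ and therefore both $A(q)|_\h=0$ and $A(p)|_\h=f|_\h B(p)|_\h=0$. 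If $p\in X^H\setminus X^T$, then $\h\subset\t_p=\h_j$ for some $j$, so $f|_\h=0$; then $A(p)|_\h=0$ and, for any $q\in X^H$ near $p$, either $q\notin X^T$ (so $A(q)=0$) or $q\in X^T$ (so $A(q)|_\h=f|_\h B(q)|_\h=0$).

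Having verified that $A\in\A_T(X)$, we have $r(A)=fB$ by construction, so $fB$ lies in the image of $r$ for every $B\in\A_T(X^T)$. This shows that the cokernel of $r$ is annihilated by the single fixed nonzero polynomial $f$, which is stronger than $S(\t^*)$-torsion and yields the conclusion.

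The only delicate point is the case analysis verifying that $A$ is an assignment; the mild subtlety is that neighborhoods of $p$ in $X^H$ need not be contained in $X^T$, but this is precisely what forces us to have killed the restriction of $f$ to every $\h_i$, and consequently to every $\h$ contained in some $\h_i$. Once this is observed, the compatibility conditions follow routinely from the local constancy of $B$ on $X^T$ together with the vanishing property of $f$.
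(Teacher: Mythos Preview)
Your proof is correct and follows essentially the same approach as the paper: choose $f$ vanishing on every proper infinitesimal isotropy and extend $fB$ by zero. The only difference is organizational: the paper verifies the assignment condition by splitting on whether $\h$ is contained in some proper isotropy (if so, $A^\h\equiv 0$ on all of $X^H$; if not, $X^H=X^T$ and local constancy of $B$ applies), whereas you split on whether the base point $p$ lies in $X^T$ --- your Case~2 in fact recovers the paper's global observation, so the two arguments are the same up to presentation.
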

\begin{proof}
We have to show that for every assignment $A\in \A_T(X^T)$ there exists a polynomial $f\in S(\t^*)$ such that $fA$ is in the image of $r$.

Let $f$ be any polynomial that vanishes on all proper (i.e., $\neq \t$) infinitesimal isotropies of the action. We define an assignment $B$ on $X$ by declaring 
\[
B(p) = \begin{cases} fA(p) \in S(\t^*), & {\rm if} \ p\in X^T \\ 0, & {\rm if } \ p\notin X^T. \end{cases}
\] 
This really defines an assignment. { Let us check that it satisfies the condition in Definition \ref{defiggk}.}
 If $\h$ is contained in a proper infinitesimal isotropy, then $B^\h$ is identically zero; otherwise, $X^H=X^T$ and for any $p$ in a connected component of this space one has $B^\h(p)=f|_{\h}A(p)|_{\h}$, which  
is constant on that component, since $A$ is an assignment on $X^T$.  
\end{proof}

\begin{corollary}\label{cor-rank} For an action of a torus $T$ on a topological space satisfying Assumption 1, the restriction map 
$r:\A_T(X)\to \A_T(X^T)$ is an isomorphism modulo torsion.
Consequently, the rank of $\A_T(X)$ over ${S(\t^*)}$ is equal to the number of connected components of $X^T$.
\end{corollary}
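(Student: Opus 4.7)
The plan is to chain together the two preceding propositions and then compute the rank of $\A_T(X^T)$ directly from Example \ref{ex:trivial}. Concretely, Proposition \ref{ttx} tells us that $\ker(r)$ is precisely the $S(\t^*)$-torsion submodule of $\A_T(X)$, while Proposition \ref{coker} shows that $\mathrm{coker}(r)$ is also $S(\t^*)$-torsion. Let $K$ denote the field of fractions of $S(\t^*)$. Since localization is exact, tensoring the four-term exact sequence
\[
0\longrightarrow \ker(r)\longrightarrow \A_T(X)\stackrel{r}{\longrightarrow} \A_T(X^T)\longrightarrow \mathrm{coker}(r)\longrightarrow 0
\]
with $K$ over $S(\t^*)$ kills both the kernel and the cokernel, which yields the desired isomorphism $r\otimes\mathrm{id}_K: \A_T(X)\otimes_{S(\t^*)} K \xrightarrow{\sim} \A_T(X^T)\otimes_{S(\t^*)} K$. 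This is exactly the statement that $r$ is an isomorphism modulo torsion.

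For the rank assertion, I would first describe $\A_T(X^T)$ explicitly. The torus $T$ acts trivially on $X^T$, so Example \ref{ex:trivial} (applied on each connected component $Z_i$ of $X^T$) together with the pointwise nature of assignments gives $\A_T(X^T)\cong \prod_i S(\t^*)$, where $i$ runs over the components of $X^T$. In particular, the $S(\t^*)$-rank of $\A_T(X^T)$ equals the cardinality of the set of connected components of $X^T$. Combined with the isomorphism after tensoring with $K$ established in the previous paragraph, this gives $\mathrm{rank}_{S(\t^*)} \A_T(X) = \mathrm{rank}_{S(\t^*)} \A_T(X^T) = \#\pi_0(X^T)$.

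I do not foresee a genuine obstacle here, since both inputs are in hand; the only point that deserves a moment of care is clarifying the convention for rank (via $-\otimes_{S(\t^*)} K$), so that the statement is meaningful both when $X^T$ has finitely many components and in principle when it has infinitely many. Everything else is a formal consequence of Propositions \ref{ttx} and \ref{coker} together with Example \ref{ex:trivial}.
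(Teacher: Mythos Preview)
Your proposal is correct and is exactly the argument the paper has in mind: the corollary is stated without proof because it follows immediately from Propositions \ref{ttx} and \ref{coker} together with the description of $\A_T(X^T)$ from Example \ref{ex:trivial}, and your write-up simply makes that explicit via localization at the fraction field. Your remark about the convention for rank when $X^T$ might have infinitely many components is a fair caveat, though in the paper's later applications (cf.\ Assumption~3) this situation does not arise.
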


\section{A Chang-Skjelbred  lemma}\label{sec:cs}

Let  $X$ be a  connected topological space acted on by the torus $T$. Consider 
the {\it 1-skeleton} of the action, which is
$X_1:=\{p\in X \mid {\rm corank } \ T_p \le 1\}.$ 

Besides Assumption 1 in the previous section, the following extra condition is needed here:

\noindent{\bf Assumption 2.} For any  subtorus $H\subset T$,  any component of $X^H$ has non-trivial intersection with $X^T$ and connected intersection with $X_1$.

 \subsection{Example: equivariantly formal actions}\label{cef}
In this subsection we will show that Assumption 2 is fulfilled  if $X$ is compact Hausdorff and the  $T$-action is 
 equivariantly formal  in the sense that $H^*_T(X)$ is  free relative to its canonical structure of $H^*(BT)$-module. 
 (We consider here \v{C}ech cohomology with real coefficients.) 
We first prove the following lemma.

\begin{lemma}\label{eqfo} Assume that  $X$ is compact Hausdorff, the $T$-action on $X$ is equivariantly formal
 and $\dim H^*(X)<\infty$. Then for any subtorus $H\subset T$, the $T$-action on (any connected component of) $X^H$ is equivariantly formal.
\end{lemma}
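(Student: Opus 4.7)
My plan is to prove the lemma in two steps: first establish that the $H$-action on $X$ is itself equivariantly formal, then invoke Borel's equality characterization of equivariant formality (valid for torus actions on compact Hausdorff spaces with finite-dimensional \v{C}ech cohomology) to transfer the formality to $X^H$ with its residual $T$-action.

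First, I would show equivariant formality of the $H$-action by running the Eilenberg--Moore spectral sequence for the homotopy-pullback square
\[
\begin{CD}
X \times_H ET @>>> X \times_T ET \\
@VVV @VVV \\
BH @>>> BT,
\end{CD}
\]
whose vertical maps are the Borel-construction projections and whose bottom map is induced by $H \hookrightarrow T$. Its $E_2$-term is $\operatorname{Tor}_{H^*(BT)}\!\big(H^*_T(X),\,H^*(BH)\big)$, converging to $H^*_H(X)$. By $T$-equivariant formality, $H^*_T(X)$ is free over $H^*(BT)$, so higher Tor vanishes and one obtains $H^*_H(X) \cong H^*_T(X) \otimes_{H^*(BT)} H^*(BH) \cong H^*(X) \otimes H^*(BH)$ as $H^*(BH)$-modules, i.e., the $H$-action is equivariantly formal. (An equivalent, more elementary route: pick a splitting $T\cong H\times L$, reinterpret $X\times_T ET$ as the $L$-Borel construction of $X\times_H EH$, and use freeness of $H^*_T(X)$ over the factor $H^*(BL)\subset H^*(BT)$ to force the Serre spectral sequence of $X\times_H EH\to X\times_T ET\to BL$ to degenerate.)

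Next, I would finish by a dimension count. By Borel's equality applied to the now-equivariantly-formal $H$-action on $X$, $\dim H^*(X^H) = \dim H^*(X) < \infty$, so $X^H$ is a compact Hausdorff space of finite-dimensional cohomology. Since $H\subseteq T$ gives $(X^H)^T = X^T$, and since equivariant formality of the $T$-action on $X$ yields $\dim H^*(X^T) = \dim H^*(X)$, we chain equalities
\[
\dim H^*((X^H)^T) \;=\; \dim H^*(X^T) \;=\; \dim H^*(X) \;=\; \dim H^*(X^H),
\]
and Borel's equality, applied in reverse to the $T$-action on $X^H$, yields equivariant formality of that action. The refinement to connected components is automatic: each component $Z$ of $X^H$ is $T$-invariant (as $T$ is connected), there are only finitely many of them (by finiteness of $H^0$), and both $H^*$ and $H^*_T$ split as direct sums over a finite disjoint union, so freeness over $H^*(BT)$ passes to each summand.

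The main obstacle I anticipate lies in Step 1, namely justifying the Eilenberg--Moore (or equivalently Serre) spectral sequence argument in the compact Hausdorff, \v{C}ech cohomology setting of the paper --- specifically, ensuring the standard convergence and $E_2$-degeneration hypotheses are in force. Once that technical piece is secured, Step 2 is a short dimension count driven entirely by Borel's inequality together with its equality case.
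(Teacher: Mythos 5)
Your proposal is correct, but your Step~1 is a detour that the paper avoids entirely. The paper's proof also rests on Borel's inequality/equality criterion (from \cite[Corollary 2, p.~46]{Hs}), but never proves that the $H$-action on $X$ is equivariantly formal, and never needs the equality $\dim H^*(X^H)=\dim H^*(X)$ as such. The point you missed is that the \emph{inequality} $\dim H^*(X^H)\le \dim H^*(X)$ --- Borel's inequality applied to the $H$-action on $X$, which requires no formality hypothesis --- already suffices. Combining it with the Borel inequality for the $T$-action on $X^H$ (which, via $(X^H)^T=X^T$, gives $\dim H^*(X^T)\le\dim H^*(X^H)$), one obtains the sandwich
\[
\dim H^*(X^T)\ \le\ \dim H^*(X^H)\ \le\ \dim H^*(X),
\]
and since the two outer terms agree by $T$-equivariant formality of $X$, all three coincide. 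The first of the resulting equalities, $\dim H^*((X^H)^T)=\dim H^*(X^H)$, is precisely Borel's equality criterion applied to the $T$-action on $X^H$, which gives the formality you want (and finite-dimensionality of $H^*(X^H)$, needed to invoke the criterion, is a byproduct of the same sandwich). So the Eilenberg--Moore or Serre spectral-sequence argument --- which you yourself flag as the fragile part --- can simply be deleted: the whole lemma is three invocations of one inequality and its equality case. Your Step~2 is essentially what the paper does, modulo this simplification; the remark about passing to connected components is fine but is not spelled out in the paper either.
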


\begin{proof} { Recall that if $Y$ is any compact Hausdorff space acted on continuously by
the torus $T$, then $\dim H^*(Y^T)\le \dim H^*(Y)$, with equality if and only if the $T$-action is
equivariantly formal (see e.g.~\cite[Corollary 2, p.~46]{Hs}).} In particular, the claim in the lemma is equivalent to $\dim H^*((X^H)^T) = \dim H^*(X^H)$. 
But $(X^H)^T=X^T$, and hence $$\dim H^*(X^T)\le \dim H^*(X^H)\le \dim H^*(X).$$
By the aforementioned general result, we have $\dim H^*(X^T)=\dim H^*(X)$, hence the inequalities above are both equalities.
\end{proof}

From \cite[Corollary 1, p.~45]{Hs} we deduce that under the assumptions in the lemma, any connected component of $X^H$ contains a $T$-fixed point. Furthermore, the 1-skeleton of that component is
connected: this follows from \cite[Proposition 2.5]{CS}.
(Both these results are known   in the particular context of  differentiable group actions on manifolds: see, e.g.,
\cite[Theorem 11.6.1]{GS} or \cite[Lemma 3.1]{Go-Ma}.) 

\begin{example}
{\rm Not every action that satisfies Assumptions 1 and 2 is equivariantly formal. 
To see this, consider again Example  \ref{modif}: as noticed in \cite{Ty}, the action of $S^1$ on $X$ is not equivariantly formal;
the reason is that it has finitely many fixed points and $H^1(X)= \bR$.   
Another example is obtained by taking $S$, the (circular) subgroup of ${\rm SU}(3)$ which consists of all diagonal matrices of the form
 ${\rm Diag}(z^{-1}, z^{-2},z^3)$, where $z\in S^1$. The action of $S$  on ${\rm SU}(3)/S$
 given by multiplication from the left 
 is not equivariantly formal, see \cite[Proposition 8.9]{Ca}. 
 It satisfies Assumptions 1 and 2 in an obvious way. If one wants an action  with a non-trivial 1-skeleton, take the direct product of the
 $S$-action above with itself:  $S\times S$ on
  ${\rm SU}(3)/S \times {\rm SU}(3)/S$. One can easily check that
  this is again not equivariantly formal and
  satisfies Assumption 1.  Assumption 2 only needs to be checked for
  $H=\{(I,I)\}$, $H=\{I\}\times S$, and $H=S\times \{I\}$ and this can be done immediately
  (note that  the 1-skeleton is
  the union 
   $\left[({\rm SU}(3)/S)^S\times {\rm SU}(3)/S \right]\cup 
   \left[{\rm SU}(3)/S\times ({\rm SU}(3)/S)^S\right]$,
   which is a connected subspace of ${\rm SU}(3)/S\times {\rm SU}(3)/S$).  }
\end{example}

\subsection{The Chang-Skjelbred ``lemma"}
 
We will prove an assignment version of  \cite[Lemma 2.3]{CS}.

\begin{proposition}\label{propres} If Assumptions 1 and 2 are valid, then the restriction map
$r:\A_T(X) \to \A_T(X^T)$ is injective. Its image is the same as the image of
$r':\A_T(X_1) \to \A_T(X^T)$.
\end{proposition}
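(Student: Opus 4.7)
The plan is to obtain both statements from one core principle: every point of $X$ (or of $X_1$) can be ``traced back'' to the $T$-fixed set through a connected orbit-type stratum, so that assignments on $X$ are rigidly determined by their values on $X^T$ while being freely extendable once they are consistent on the 1-skeleton.

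\textbf{Step 1 (Injectivity).} Given $A \in \A_T(X)$ with $r(A) = 0$, pick $p \in X$. Let $H$ be the identity component of $T_p$, so that $\h = \t_p$, and let $Y$ be the connected component of $X^H$ containing $p$. By Assumption 2 we can find $q \in Y \cap X^T$. Because $A^\h$ is locally constant on $X^H$ and $Y$ is connected, $A(p) = A(p)|_\h = A(q)|_\h = 0$, which gives $A = 0$. (Alternatively, this follows directly from Corollary~\ref{t-act} after verifying that $X^T \ne \emptyset$, which is immediate from Assumption 2 applied to the subtorus $H = \{e\}$ and to $X = X^H$.)

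\textbf{Step 2 (The easy inclusion $\mathrm{Im}(r) \subseteq \mathrm{Im}(r')$).} This is immediate from functoriality: $r$ factors as $\A_T(X) \to \A_T(X_1) \to \A_T(X^T)$, the first arrow being restriction to $X_1$ and the second being $r'$.

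\textbf{Step 3 (The reverse inclusion: construction of the extension).} Given $B \in \mathrm{Im}(r')$, choose $B_1 \in \A_T(X_1)$ with $r'(B_1) = B$. For each $p \in X$, let $H$ be the identity component of $T_p$ (so $\h = \t_p$) and let $Y$ be the component of $X^H$ containing $p$. Using Assumption 2, pick any $q \in Y \cap X^T$ and define
\[
A(p) := B_1(q)\big|_{\h} \in S(\t_p^*).
\]
For well-definedness, observe that $Y \cap X_1$ is connected (Assumption 2) and contains $q$ and every other candidate $q' \in Y \cap X^T \subseteq Y \cap X_1$; since $B_1^\h$ is locally constant on $(X_1)^H \supseteq Y \cap X_1$, it is constant on $Y \cap X_1$, so the value $B_1(q)|_\h$ does not depend on the choice of $q$. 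The same argument, applied with $q = p$ when $p \in X_1$, shows $A|_{X_1} = B_1$, and in particular $r(A) = B$.

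\textbf{Step 4 (Checking the assignment axiom for $A$).} Let $K \subset T$ be a subtorus with Lie algebra $\k$, and let $Z$ be a connected component of $X^K$. For $p, p' \in Z$ with associated components $Y \ni p$ of $X^H$ and $Y' \ni p'$ of $X^{H'}$ (where $H, H'$ are the identity components of $T_p$, $T_{p'}$, both containing $K$), pick $q \in Y \cap X^T$ and $q' \in Y' \cap X^T$. Since $Y \subset X^H \subset X^K$ is connected and contains $p$, we have $Y \subset Z$, hence $q \in Z$; similarly $q' \in Z$. By Assumption 2, $Z \cap X_1$ is connected and contains $q, q'$, and $B_1^\k$ is locally constant on $(X_1)^K \supseteq Z \cap X_1$, hence constant on $Z \cap X_1$. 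Therefore
\[
A(p)|_\k = B_1(q)|_\k = B_1(q')|_\k = A(p')|_\k,
\]
so $A^\k$ is (locally) constant on each component of $X^K$.

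\textbf{Main obstacle.} The delicate point is the well-definedness in Step 3 and the verification of the assignment axiom in Step 4: both rely on transporting information from $q, q' \in X^T$ along a \emph{connected} subset of $X_1$ to invoke the assignment property of $B_1$. This is precisely what the second clause of Assumption 2 (connected intersection with $X_1$) guarantees; without it one would have no way of knowing that different choices of $T$-fixed points in $Y$ (respectively in $Z$) produce the same restrictions of $B_1$.
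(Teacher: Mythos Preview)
Your proof is correct and follows essentially the same route as the paper's: injectivity via Assumption~2 applied to the identity component of $T_p$, the easy inclusion via factorization, and the extension defined by $A(p):=B_1(q)|_{\t_p}$ for any $q\in Y\cap X^T$, with both well-definedness and the assignment axiom verified using the connectedness of $Y\cap X_1$ (resp.\ $Z\cap X_1$) inside $(X_1)^H$ (resp.\ $(X_1)^K$). The paper compresses your Steps~3 and~4 into a single verification of the identity $B_1(Z)|_{\g}=B_1(Z')|_{\g}$ for two components $Z,Z'$ of $X^T$ lying in one component of $X^G$, but the content is the same; your version is simply more explicit, and you additionally observe that the extension restricts to $B_1$ on all of $X_1$ (not just on $X^T$), which the paper does not state.
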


\begin{proof} We first show that $r$ is injective. Take $A\in \A_T(X)$ such that $r(A)=0$. Take $p\in X$ arbitrary. Denote by $H$ the identity component of $T_p$
 and by $\h$ its Lie algebra. 
By Assumption 2, the connected component of $X^H$ through $p$ contains a connected component of $X^T$.
 On the latter component $A$ is identically zero, hence $A^\h$ is identically zero on the former component as well. 
 This implies that $A(p)=0$.

For the second claim in the proposition, observe that one can factorize $r$ as
$$\A_T(X) \to \A_T(X_1) \stackrel{r'}{\to} \A_T(X^T).$$
Hence the image of $r$ is contained in the image of $r'$. 
We now prove the other inclusion.  

We consider $A\in \A_T(X_1)$ and construct $B\in \A_T(X)$ such that $r(B)=r'(A)$.
It will be convenient to use the following notation: if $Z\subset X$ is a connected component of $X^T$,
then
$$A(Z):=A(z), \ {\rm for \ all \ } z \in Z.$$
Take $p\in X$, let $H$ be the identity component of $T_p$, set $\h:={\rm Lie}(H)$, and denote by $Y$ the connected component 
of $X^H$ that contains $p$. 
 By Assumption 2, there exists a
component $Z$ of $X^T$ such that $Z\subset Y$. We set
$$B(p):=A(Z)|_{\h}.$$
We show that $B$ is well defined (i.e., $B(p)$ is independent of the choice of $Z$) and an assignment as well.
To this end, we take a  subtorus
$G \subset T$, $G\neq T$, a connected component $Y$ of $X^G$, two components $Z$ and
$Z'$ of $X^T$, both contained in $Y$, and show that
\begin{equation}\label{ay}A(Z)|_{\g} = A(Z')|_{\g},\end{equation}
where $\g:={\rm Lie}(G)$.
Indeed, by Assumption 2,  $Y\cap X_1$ is a connected subspace of 
$X_1^G$.  Hence the function $p\mapsto A(p)|_\g$ is constant on $Y\cap X_1$. 
Also note that $Z$ and $Z'$ are both contained in $Y\cap X_1$.   
\end{proof}

\subsection{A GKM description of the assignment algebra}\label{subsec:gkm}
 { Assumption 1 will be valid throughout this section.} Let us consider all possible infinitesimal isotropies  
which have codimension one in $\t$; say that they  are $\g_1,\ldots, \g_m$. 
By definition, the 1-skeleton of the $T$-action on $X$  is the union  of all  $X^{\g_i}:=\{p\in X \mid \g_i\subset \t_p\} $, $1\le i \le m$. 
The following supplementary requirement will be needed in this subsection:

\noindent{\bf Assumption 3.} Each of the spaces $X^\t$ and $X^{\g_i}$, $1\le i \le m$, has finitely many connected components. 

For example, this condition is satisfied if $X$ is compact Hausdorff with $\dim H^*(X) <\infty$, see, e.g.~\cite[Corollary 3.10.2]{AP}.

Assumptions 1, 2, and 3  alone lead to a presentation of the algebra $\A_T(X)$ which is similar to the one given by Goresky, Kottwitz, and MacPherson \cite{GKM} for the equivariant cohomology algebra $H_T^*(X)$. Recall that the latter presentation requires several other assumptions:
the $T$-action on $X$ must be equivariantly formal,  $X^T$ must be finite, and $X_1$ must be a union  of 2-spheres  (cf.~also  \cite[Section 11.8]{GS}). 
 
We denote by $Z_1, \ldots, Z_n$  the connected components of $X^T$. 

\begin{theorem}\label{gkmassign} 
If Assumptions 1, 2, and 3 are satisfied, then the image of $\A_T(X)$ under the injective algebra
homomorphism 
$r:\A_T(X)\to \A_T(X^T)$ is the subalgebra of $\A_T(X^T)=\oplus_{r=1}^n\A_T(Z_r)=S(\t^*)^n$ which consists of all $(f_1, \ldots, f_n)$ 
with the property that whenever $Z_r$ and $Z_s$ are contained in the same component of some $X^{\g_i}$, $1\le i \le m$, the difference $f_r-f_s$ restricted to  $\g_i$ is identically zero.   
\end{theorem}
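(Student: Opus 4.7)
The plan is to leverage the Chang--Skjelbred lemma (Proposition~\ref{propres}), which already gives that $r$ is injective and that $\mathrm{im}(r)=\mathrm{im}(r')$, where $r':\A_T(X_1)\to\A_T(X^T)$ is the restriction map. Let $\mathcal{G}\subset\A_T(X^T)=S(\t^*)^n$ denote the subalgebra described in the statement. It suffices to prove $\mathrm{im}(r')=\mathcal{G}$. The containment $\mathrm{im}(r')\subseteq\mathcal{G}$ is immediate: for any $A\in\A_T(X_1)$, the map $A^{\g_i}$ is locally constant on $X^{H_i}=X^{\g_i}$, hence constant on each of its connected components, so if $Z_r$ and $Z_s$ both sit in such a component the values $f_r,f_s$ of $A$ there satisfy $f_r|_{\g_i}=f_s|_{\g_i}$.

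For the reverse inclusion, given $(f_1,\ldots,f_n)\in\mathcal{G}$ I will construct $A'\in\A_T(X_1)$ with $r'(A')=(f_1,\ldots,f_n)$. Set $A'(p):=f_r$ for $p\in Z_r$; for $p\in X_1\setminus X^T$, the identity component of $T_p$ has some $\g_i$ as its Lie algebra, and choosing the connected component $W$ of $X^{\g_i}$ through $p$, by Assumption~2 one has $Z_r\subset W$ for some $r$, and I set $A'(p):=f_r|_{\g_i}$. The GKM relation defining $\mathcal{G}$ makes this value independent of the chosen $Z_r\subset W$, so $A'$ is well defined.

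The main obstacle is the verification that $A'$ is an assignment. Take a subtorus $K\subset T$ with Lie algebra $\mathfrak{k}$. Assumption~2 ensures that the connected components of $X_1^K$ are exactly the sets $Y\cap X_1$ as $Y$ runs over the connected components of $X^K$, each of which is connected and contains at least one $Z_r$. It suffices to show that on any such $Y\cap X_1$, the function $A'^{\mathfrak{k}}$ is constantly equal to $f_r|_{\mathfrak{k}}$. The crux is the following:

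\smallskip
\noindent\emph{Key Lemma.} \emph{Whenever $Z_r$ and $Z_s$ lie in a common connected component $Y$ of $X^K$, one has $f_r|_{\mathfrak{k}}=f_s|_{\mathfrak{k}}$.}
\smallskip

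To establish this, I will use a clopen-partition argument exploiting the finiteness from Assumption~3. Write $Y\cap X_1=\bigcup_{i:\mathfrak{k}\subset\g_i}(Y\cap X^{\g_i})$: for distinct codimension-one $\g_i\neq\g_j$ one has $\g_i+\g_j=\t$, so components of $X^{\g_i}$ and $X^{\g_j}$ can meet only inside $X^T$, while distinct components of the same $X^{\g_i}$ are disjoint. Declaring $Z_t\sim Z_{t'}$ whenever they both lie in a common component of some $X^{\g_i}$ with $\mathfrak{k}\subset\g_i$, each equivalence class $[Z_t]$ yields the union $E_{[Z_t]}$ of all such components that meet the class. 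By Assumption~3 each $E_{[Z_t]}$ is a finite, and hence closed, union, and any overlap of two different $E$'s would have to lie in $X^T$, forcing the classes to merge. The $E_{[Z_t]}$'s therefore form a finite clopen partition of the connected space $Y\cap X_1$, leaving exactly one class. The GKM relations then equate all $f_t|_{\g_i}$, and hence all $f_t|_{\mathfrak{k}}$, for $Z_t\subset Y$, proving the lemma. The remaining check is routine: for $p\in(Y\cap X_1)\setminus X^T$, the component $W\ni p$ of $X^{\g_i}$ satisfies $W\subset X^{\g_i}\subset X^K$, hence $W\subset Y$, so any $Z_t\subset W$ lies in $Y$, and the lemma gives $A'^{\mathfrak{k}}(p)=f_t|_{\mathfrak{k}}=f_r|_{\mathfrak{k}}$.
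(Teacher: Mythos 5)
Your proposal is correct and follows essentially the same route as the paper: both invoke Proposition~\ref{propres} to reduce to constructing a preimage on $X_1$, define the candidate $A'$ on $X_1$ by the identical formula, and then verify the assignment condition by a connectivity argument over the components of the $X^{\g_i}$ covering a component of $X^K\cap X_1$. Your clopen-partition argument (with closedness from Assumption~3) is a reformulation of the paper's chain-of-components argument, and your explicit component-by-component treatment of $X_1^K$ is a slightly more careful phrasing of the same verification.
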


\begin{proof}
If $A\in \A_T(X)$ then its restriction to $X^T$ is an $n$-tuple $(f_1, \ldots, f_n)$ which obviously satisfies the conditions in the lemma. To prove the other inclusion, we start with $(f_1,\ldots, f_n)$ with the properties in the
lemma. Consider the map $A_1$ on $X_1$ given by
$$
A_1(p) =   
\begin{cases}
f_r, {\rm \  if \ } p \in Z_r\\
f_r|_{\g_i}, {\rm \ if \ } \t_p=\g_i \ {\rm and } \ Z_r \subset X^{\g_i}.
\end{cases}
$$
Note that, by hypothesis, the polynomial $f_r|_{\g_i}$ does not depend
on $r$ with $Z_r \subset X^{\g_i}$. We now show that $A_1$ is an assignment on $X_1$.
Let $H\subset T$ be a subtorus. If $H=T$ then $X_1^H=X^T$ and $A_1$ is obviously
constant on each component of the latter space.  If $H\neq T$, we have 
$$X_1^H = \bigcup_{\h \subset \g_i} X^{\g_i}.$$
By Assumption 2, this is a connected space. We need to show that $A_1^\h$ is constant
on this space.  Look at the connected components of the spaces $X^{\g_i}$ for all
$i\in \{1,\ldots, m\}$ with $\h \subset \g_i$. The intersection of two such subspaces is empty or is a union of one 
or more $Z_r$. Since $X_1^H$ is connected, for any of the two aforementioned  components, say $Y$ and $Y'$, there exists a chain 
of components, $Y_1, \ldots, Y_q$, such that $Y_1=Y$, $Y_q=Y'$ and 
$Y_i\cap Y_{i+1}\neq \emptyset$ for all $1\le i \le q-1$.
But $A_1^\h$ is constant on each $Y_i$, hence the values on $Y$ and $Y'$ are equal. 
Thus $A_1$ is an assignment on $X_1$. 
Finally, by Proposition \ref{propres},  $A_1$  can be extended from $X_1$ to an assignment on $X$. 
 \end{proof}

The $S^1$-action described in Example \ref{modif} satisfies the hypotheses of the theorem.
We have already calculated $\A_{S^1}(X)$ and the  presentation we found is of GKM type. 
It is worthwhile noting that no such presentation exists for $H^*_{S^1}(X)$.
The reason is that, as  already mentioned, the action has finitely many fixed points and $H^1(X)=\bR$.

\section{Relation to equivariant cohomology}\label{sec:relcoh}

An important class  of assignments arise from equivariant cohomology.
Let $X$ be a compact Hausdorff topological space.
Recall that $H^*_T(X)=H^*(E\times_T X)$, where
$E=ET$ is the total space of the classifying bundle of $T$.  
(By $H^*$ we mean \v{C}ech cohomology with real coefficients.)   
We will use the identification 
\begin{equation}\label{iden}H^*_T(T/H)=S(\h^*),\end{equation} for any connected and closed subgroup $H\subset T$. Concretely, $$H^*_T(T/H)=H^*(E\times_T (T/H)) =H^*(E/H) =H^*(BH)=S(\h^*).$$
We now define  $\gamma_X : H^*_T(X) \to \A_T(X)$ as follows: to $\alpha \in H^*_T(X)$ corresponds the assignment $A$ given by
\begin{equation}\label{assign?}A(p) :=\alpha|_{Tp},\quad p\in X.\end{equation}
The right hand side represents the image of $\alpha$ under the map $i_p^*:H^*_T(X)\to H^*_T(Tp)$ induced by the inclusion $i_p:Tp\hookrightarrow X$ (the identification (\ref{iden}) is taken into account).
\begin{proposition} The map $A$ defined by (\ref{assign?}) is an assignment.
\end{proposition}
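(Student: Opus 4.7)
The plan is to reduce local constancy of $A^{\h}$ on $X^H$ to homotopy invariance of equivariant \v{C}ech cohomology, after identifying the restriction $A(p)|_{\h}$ with the pullback of $\alpha$ along a suitable $T$-equivariant map $T/H\to X$.

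First, I would make the identification in (\ref{iden}) functorial. If $H\subset K$ are two closed connected subgroups of $T$, then the quotient map $\pi:T/H\to T/K$ is $T$-equivariant, and under the identifications $H^*_T(T/H)=S(\h^*)$, $H^*_T(T/K)=S(\k^*)$, the induced map $\pi^*:S(\k^*)\to S(\h^*)$ is exactly the restriction of polynomial functions along the inclusion $\h\hookrightarrow\k$. This is standard: $\pi$ corresponds, after applying $E\times_T(-)$, to the map $E/H\to E/K$ coming from the inclusion of classifying spaces, which realizes restriction in $H^*(B(-))$.

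Next, for any $p\in X^H$ the map $\iota_p^H:T/H\to X$, $gH\mapsto g\cdot p$, is well-defined and $T$-equivariant, and factors the inclusion $i_p:T/T_p=Tp\hookrightarrow X$ as $i_p\circ\pi$, where $\pi:T/H\to T/T_p$ is the projection (here we use that the identity component $H^\circ$ of $H$ satisfies $\h\subset\t_p$, and strictly speaking one replaces $H$ by $H^\circ$ for the identification with $S(\h^*)$; since Definition \ref{defiggk} only sees $\h$, this causes no issue). Combining with the previous paragraph, I obtain
\[
A^{\h}(p)=A(p)|_{\h}=(\iota_p^H)^*(\alpha)\in S(\h^*)=H^*_T(T/H).
\]

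The core step is now to show that $(\iota_p^H)^*(\alpha)$ depends only on the connected component of $X^H$ containing $p$. Given $p,q$ in the same component of $X^H$, choose a path $\gamma:[0,1]\to X^H$ from $p$ to $q$ and define
\[
F:T/H\times[0,1]\longrightarrow X,\qquad F(gH,t):=g\cdot\gamma(t).
\]
This is well defined because $\gamma(t)\in X^H$ forces $H\subset T_{\gamma(t)}$, it is continuous, and it is $T$-equivariant with respect to the trivial $T$-action on $[0,1]$. Thus $F$ is a $T$-equivariant homotopy between $\iota_p^H$ and $\iota_q^H$. By the homotopy invariance of equivariant cohomology, $(\iota_p^H)^*(\alpha)=(\iota_q^H)^*(\alpha)$, so $A^{\h}$ is constant on each component of $X^H$ and in particular locally constant.

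The only subtlety I anticipate is bookkeeping around connectedness of isotropy groups: Definition \ref{defiggk} uses the Lie algebra $\h$ of a connected subtorus $H$, while the identification $H^*_T(T/H)=S(\h^*)$ is most cleanly stated for connected $H$. Once one restricts throughout to identity components this is routine, and the path-homotopy argument above is the genuine content of the proposition.
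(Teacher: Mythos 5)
Your setup is the same as the paper's: you reduce the claim to showing that $(\iota_p^H)^*(\alpha)\in H^*_T(T/H)\cong S(\h^*)$ depends only on the connected component of $X^H$ containing $p$, and your functorial identification of restriction $S(\t_p^*)\to S(\h^*)$ with the pullback along $T/H\to T/T_p$ is exactly the paper's observation about the map $a_p$. The gap is in the core step. You join $p$ and $q$ by a path $\gamma$ in $X^H$ and use the equivariant homotopy $F(gH,t)=g\cdot\gamma(t)$; but such a path exists only when $p$ and $q$ lie in the same \emph{path}-component of $X^H$, whereas Definition~\ref{defiggk} (and the notion of local constancy) is governed by connected components. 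In the generality of Section~\ref{sec:relcoh}, $X$ is merely a compact Hausdorff space, so a connected component of $X^H$ need not be path-connected, and your argument would only give constancy on path-components --- strictly weaker than what is required. The subtlety you flagged (identity components of isotropy groups) is indeed routine, but this one is not.

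The paper sidesteps path-connectedness entirely. After factoring the map $E\times_T(T/H)\to E\times_T Y$ (where $Y$ is the connected component of $X^H$ through $p$) through $E\times_H Y\cong (E/H)\times Y$, the question becomes whether $j_p^*\colon H^*((E/H)\times Y)\to H^*(E/H)$, $j_p([e])=([e],p)$, is independent of $p\in Y$. By the K\"unneth formula, $H^*((E/H)\times Y)\cong H^*(E/H)\otimes H^*(Y)$ and $j_p^*$ is projection onto $H^*(E/H)\otimes H^0(Y)$. Since \v{C}ech $H^0$ of a connected space is $\bR$, this projection is the same for every $p\in Y$, with no path assumption. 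Your argument would become correct under an extra hypothesis such as $X$ locally path-connected (so connected components of $X^H$ are path-connected), but as written it does not cover the paper's stated generality; the K\"unneth argument is what makes the proof work for arbitrary compact Hausdorff $X$.
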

\begin{proof} 
We need to show that $A$ satisfies the condition in Definition \ref{defiggk}.  Let $H\subset T$ be a connected  and closed subgroup and $Y\subset X$ a connected component of $X^H$. For $p\in Y$, the inclusion $i_p:Tp \hookrightarrow X$ factorizes as
$Tp \hookrightarrow Y \hookrightarrow X$. Moreover, the map $S(\t_p^*)\to S(\h^*)$ given by restriction to $\h$
is actually the same as $H^*_T(Tp) \to H^*_T(T/H)$ induced by  $a_p:T/H \to T p\subset Y$, $tH \mapsto tp$, for all $t\in T$ (indeed, this is the only $S(\t^*)$-algebra homomorphism  $S(\t_p^*)\to S(\h^*)$). 
It is sufficient to show that  the map $a_p^*:H^*_T(Y)\to H^*_T(T/H)$  is independent of $p\in Y$. But the map $E \times_T (T/H) \to E\times_T Y$ induced by $a_p$ 
is given by $[e, H]\mapsto [e, p]$, for all $e\in E$.
It can  be factorized as:
 \begin{equation*}
\xymatrix{ 
E\times_T(T/H)
\ar[rr]^{} 
\ar[dr]_{} 
&&E\times_TY 
 \\ 
& E\times_H Y 
\ar[ur]} \\
\end{equation*}
where $E\times_H Y \to E\times_T Y$ is the canonical map induced by the inclusion $H\subset  T$. 
The left-hand side map in the diagram is $[e, H] \mapsto[e,p]$, for all $e\in E$;
this map can also be expressed as: 
 \begin{equation*}
\vcenter{\xymatrix{
E\times_T (T/H)\ar[d]^{\cong}\ar[r]^{  } &
E\times_H Y\ar[d]^{\cong}
  &
\\
E/H \ar[r]^{ j_p: [e]\mapsto ([e], p) \ \ \ }&
(E/H) \times Y
 &
\\ 
}}
\end{equation*} 
Finally observe that $j_p^*: H^*((E/H) \times Y)\to H^*(E/H)$ is independent of $p\in Y$:
by the K\"unneth formula, $ H^*((E/H) \times Y)$ can be identified with $H^*(E/H)\otimes H^*(Y)$ and $j_p^*$ is the
projection of the latter space on $H^*(E/H)\otimes H^0(Y) \simeq H^*(E/H)$. 
\end{proof}

Observe that both $H^*_T$ and $\A_T$ are contravariant functors from the category of topological compact Hausdorff $T$-spaces to the category of graded $S(\t^*)$-algebras.
\begin{proposition}\label{only} $\gamma$ is the only natural transformation between the two functors $H^*_T$ and $\A_T$.
\end{proposition}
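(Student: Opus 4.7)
The plan is to show that any natural transformation $\eta\colon H^*_T\Rightarrow \A_T$ of graded $S(\t^*)$-algebra valued functors must coincide with $\gamma$. The strategy is to reduce the problem first to orbits, and then to the one-point space, by exploiting naturality with respect to two simple classes of equivariant maps.

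First I would use naturality with respect to the inclusions $i_p\colon Tp \hookrightarrow X$ for $p\in X$. The orbit $Tp\cong T/T_p$ is connected and has constant infinitesimal isotropy equal to $\t_p$, so a direct application of Definition \ref{defiggk} shows that $\A_T(Tp)\cong S(\t_p^*)$ via $A\mapsto A(p)$; under this identification the map $i_p^*$ on $\A_T$ becomes evaluation at $p$, while on $H^*_T$ it is the restriction map fixed in (\ref{iden}). The naturality square for $i_p$ therefore reduces the value $\eta_X(\alpha)(p)$ to $\eta_{Tp}(\alpha|_{Tp})$, while the analogous computation for $\gamma$ gives $\gamma_X(\alpha)(p)=\alpha|_{Tp}$. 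Thus uniqueness is reduced to showing that $\eta_{Tp}\colon S(\t_p^*)\to S(\t_p^*)$ is the identity for every orbit.

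Next I would observe that $\eta_{\{\ast\}}\colon S(\t^*)\to S(\t^*)$ is automatically the identity, since it is a morphism of graded $S(\t^*)$-algebras and therefore fully determined by $\eta_{\{\ast\}}(1)=1$. Then I would apply naturality to the constant equivariant map $c\colon Tp\to \{\ast\}$. A direct inspection of the definitions shows that $c^*$ on both $H^*_T$ and $\A_T$ coincides, under the identifications above, with the surjective restriction homomorphism $S(\t^*)\to S(\t_p^*)$. Combined with $\eta_{\{\ast\}}=\mathrm{id}$, the naturality square forces $\eta_{Tp}\circ c^* = c^*$, and the surjectivity of $c^*$ then forces $\eta_{Tp}=\mathrm{id}$.

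Combining the three steps yields $\eta_X(\alpha)(p)=\alpha|_{Tp}=\gamma_X(\alpha)(p)$, so $\eta=\gamma$. The step requiring the most care is the first one: unlike the one-point space, an orbit $Tp$ is not acted on trivially, so the identification $\A_T(Tp)\cong S(\t_p^*)$ does not follow directly from Example \ref{ex:trivial} but rather from the combination of the connectedness of $Tp$ with the fact that the infinitesimal isotropy is constant along an orbit, which together imply that every assignment on $Tp$ must be a constant function.
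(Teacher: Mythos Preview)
Your uniqueness argument is correct and follows essentially the same route as the paper: reduce via naturality along the inclusion $i_p\colon Tp \hookrightarrow X$ to the claim that $\eta_{Tp}\colon S(\t_p^*)\to S(\t_p^*)$ is the identity. The paper dispatches this last point in one line, observing that since the structure map $S(\t^*)\to S(\t_p^*)$ is surjective, there is a \emph{unique} $S(\t^*)$-algebra homomorphism $S(\t_p^*)\to S(\t_p^*)$, namely the identity. Your second reduction---naturality along the constant map $c\colon Tp\to\{\ast\}$ together with surjectivity of $c^*$---is exactly the same observation repackaged as a naturality square; it is correct but slightly less direct.

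One omission worth noting: the proposition asserts both that $\gamma$ \emph{is} a natural transformation and that it is the only one. Your proposal treats only uniqueness. The paper first verifies naturality of $\gamma$ by a short diagram chase: for an equivariant $f\colon X\to X'$ one factors $i_{f(p)}\circ f|_{Tp} = f\circ i_p$ and uses that $(f|_{Tp})^*\colon S(\t_{f(p)}^*)\to S(\t_p^*)$ is the restriction map induced by $\t_p\subset \t_{f(p)}$.
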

\begin{proof} To prove that $\gamma$ is a natural transformation we only have to verify that for every continuous $T$-equivariant map $f:X\to X'$ 
the diagram
\[
\xymatrix{
H^*_T(X') \ar[r]^{f^*} \ar[d]^{\gamma_{X'}} & H^*_T(X) \ar[d]^{\gamma_X} \\
\A_T(X') \ar[r]^{f^*} & \A_T(X)
}
\]
commutes. Take $\alpha \in H^*_T(X')$ and $p\in X$. Denote by $i_p:Tp\to X$ and $i_{f(p)}:Tf(p) \to X'$ the 
inclusion maps. Then the following diagram is commutative:
\[
\xymatrix{
Tp \ar[r]^{f|_{Tp}} \ar[d]^{i_p} & Tf(p) \ar[d]^{i_{f(p)}} \\
X \ar[r]^{f} & X'
}
\]
We thus have \begin{align*}\gamma_X(f^*(\alpha))(p)&= i_p^*(f^*(\alpha))\\{}&=(f|_{Tp})^*(i_{f(p)}^*(\alpha))=
(i_{f(p)})^*(\alpha)|_{\t_p}=\gamma_{X'}(\alpha)(f(p))|_{\t_p}=f^*(\gamma_{X'}(\alpha))(p).\end{align*}
Here we have used that $(f|_{Tp})^*:H_T^*(Tf(p))=S(\t_{f(p)}^*)\to H_T^*(Tp)=S(\t_p^*)$  is just the
restriction map induced by the inclusion $\t_p\subset \t_{f(p)}$.

For the converse we let $\eta$ denote any natural transformation between $H^*_T$ and $\A_T$. We fix an arbitrary compact Hausdorff $T$-space $X$ and show that the $S(\t^*)$-algebra homomorphism $\eta_X:H^*_T(X)\to \A_T(X)$ necessarily coincides with $\gamma_X$. As assignments are determined by their values at each point, 
we fix an arbitrary point $p$ and consider again the inclusion $i_p:T p\to X$. Then we have a commutative diagram
\[
\xymatrix{
H^*_T(X) \ar[r]^{i_p^*} \ar[d]^{\eta_X} & H^*_T(T p) \ar[d]^{\eta_{T p}} \\
\A_T(X) \ar[r]^{i_p^*} & \A_T(T p)
}
\]
Both objects on the right are isomorphic to $S(\t_p^*)$. We observe that the bottom horizontal map is just evaluation at $p$. Moreover, the vertical map on the right is necessarily the unique $S(\t^*)$-algebra homomorphism $S(\t_p^*)\to S(\t_p^*)$, namely the identity. Thus, the commutativity 
of the diagram implies that $\eta_X=\gamma_X$.
\end{proof}

\begin{remark} {\rm Observe that by restriction to the even-dimensional part of the equivariant cohomology groups, $\gamma$ induces
a transformation between the functors $H_T^{{\rm even}}$ and $\A_T$. With the same methods as in the proof of Proposition 
\ref{only}, one can show that $\gamma|_{H_T^{{\rm even}}}$ is the only natural transformation between these two functors.  
Recall that another natural transformation between  $H_T^{{\rm even}}$ and $\A_T$ was introduced in  \cite{GSZ}, in the context of smooth $T$-manifolds,  using the Cartan model of equivariant cohomology (cf.~also Section \ref{sec:smooth} above). We deduce that this transformation
coincides with  $\gamma|_{H_T^{{\rm even}}}$ on compact smooth $T$-manifolds.}
\end{remark}

As noted in \cite[Section 4]{GSZ}, if $X$ is a compact manifold and the action of $T$ on $X$ is of GKM type, then
$\gamma_X$ is an isomorphism. Here are two non-smooth examples when $\gamma_X$ is an isomorphism.

\begin{example} {\rm For the weighted projective plane ${\mathbb P}(m,n)$ already addressed in Example \ref{ex:weight}, the map
$\gamma: H^*_{S^1}({\mathbb P}(m,n))\to \A_{S^1}({\mathbb P}(m,n)) $ is an isomorphism. This follows from the GKM presentation
of   $H_{S^1}^*({\mathbb P}(m,n))$.}
\end{example}

\begin{example} \label{ex:notfree} {\rm   Consider 
$X=\Sigma T$, the unreduced suspension of the  2-torus $T=T^2$, with the canonical $T$-action.
The action has two fixed points and is free on their complement in $X$.
Both $H^*_T(X)$ and $\A_T(X)$ can be easily calculated by using the Mayer-Vietoris
sequence, see Proposition \ref{mv}. As it turns out, both algebras  are isomorphic to the space
$\{(f_1, f_2)\in S(\t^*)\times S(\t^*) \mid f_1(0) = f_2(0)\}$ (see also 
\cite[Example 5.5]{Fr-Pu}). This shows that the corresponding map $\gamma$ is again an isomorphism. 
It is shown in  \cite[Example 3.3]{All} that  $H^*_T(X)$ equipped with its canonical structure of $S(\t^*)$-module  is torsion free but not  free.
 Thus, the same can be said about $\A_T(X)$. }
\end{example}

\begin{remark}{ {\rm Observe that, in general, the assignment algebra
can be defined ``over $\bZ$": that is, in Definition \ref{defiggk}, one considers $A(p)\in S(({\mathfrak t}_p)_\bZ^*)$, 
where  $({\mathfrak t}_p)_\bZ^*$ is the weight lattice of ${\mathfrak t}_p$, $p\in X$. 
 It would be worthwhile investigating this new invariant, call it ${\mathcal A}_T(X;\bZ)$. 
 The first natural question to be addressed is to which extent the Chang-Skjelbred lemma for
 assignments, i.e.~Proposition \ref{propres} above, can be extended over $\bZ$, in the same way as
 the usual Chang-Skjelbred lemma was extended for integral cohomology by Franz and Puppe in \cite[Corollary 2.2]{FP}
 and by Goertsches and Wiemeler in \cite[Lemma 6.1]{GW}. After that, one could start searching for
 extensions of Theorem \ref{gkmassign}.  In this context, we point out that for general 
 GKM actions,   ${\mathcal A}_T(X;\bZ)$ is not isomorphic to $H^*_T(X;\bZ)$. 
 To see this, consider again Example \ref{ex:weight}, this time with $m=n=1$; that is, we just
look at the standard rotation action of $S^1$ on $S^2$, the spinning speed being $k$. Like in Example \ref{ex:weight},
${\mathcal A}_{S^1}(S^2;\bZ)$ consists of pairs 
$f_1, f_2\in \bZ[u]$ with $f_1-f_2$ divisible by $u$ (independent of $k$). However, $H^*_{S^1}(S^2;\bZ)$ does depend on $k$: it
consists of pairs $f_1, f_2\in \bZ[u]$ with $f_1-f_2$ divisible by $ku$. This example seems to indicate that, in general,
the isomrphism between the assignment algebra over $\bZ$ and the integral equivariant cohomology for GKM torus actions requires the supplementary
assumption that any of the weights at any fixed point is primitive, i.e., no integer multiple of another weight.}}
\end{remark}

\section{Locally free actions}

In this section we will assume that   $X$ is a completely regular topological space.
This assumption will allow us to use the Slice Theorem, cf., e.g.,  \cite[Ch.~II, Theorem 5.4]{Br},
which is an essential ingredient for us. For example, any Hausdorff locally compact topological space is completely regular. 
  
Consider now a subtorus $Q\subset T$, whose induced action on $X$ is locally free, i.e., the isotropy $Q_p$ is finite
for all $p\in X$.   The quotient $T/Q$ acts canonically on $X/Q$, as follows:
$tQ.Qp:=Qtp$, $t\in T, p\in X$.  In this section we show that 
$\A_T(X)$ is isomorphic to $\A_{T/Q}(X/Q)$. 
In the case when  $X$ is smooth  and the $Q$-action is smooth and free, this result 
has been proved in    \cite[Section 8]{GSZ}: the isomorphism is constructed there explicitly by relating the stratifications of
$X$ and $X/Q$. 
We adapted the approach  from the aforementioned paper to our set-up.
The main difference is that we use the pointwise definition of assignments.
The two major benefits of this definition are that the result
we will prove is purely topological, hence more general, and that the whole construction 
involves only points rather than strata, and is therefore more transparent.

Let  $\pi : X \to X/Q$ be the canonical projection. We construct a map $\pi_* : \A_T(X)\to \A_{T/Q}(X/Q)$, as follows.
Take $p\in X$. The isotropy group $(T/Q)_{Qp}$ is equal to $T(p)/Q$, where
$$T(p):=\{t\in T \mid tp  \in Qp\}.$$
The group $T(p)$ acts transitively on $Qp$, thus the latter space is homeomorphic to both
$T(p)/T_p$ and $Q/Q_p$. 

\begin{lemma}\label{thele} The map $T_p/Q_p \to T(p)/Q$ given by the inclusion of $T_p$ into $T(p)$ followed by the canonical projection
is a group isomorphism.
\end{lemma}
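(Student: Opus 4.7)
The plan is to verify the three standard items needed: well-definedness, injectivity, and surjectivity. The key observation is that $Q_p = Q \cap T_p$ since $Q_p$ is by definition the isotropy of the restricted $Q$-action at $p$.

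First I would check well-definedness. The map in question is the composition $T_p \hookrightarrow T(p) \twoheadrightarrow T(p)/Q$. To see that it factors through $T_p/Q_p$, note that $Q_p = T_p \cap Q$, which is sent to the identity coset of $T(p)/Q$. It is a group homomorphism because both the inclusion $T_p \hookrightarrow T(p)$ and the projection $T(p) \twoheadrightarrow T(p)/Q$ are group homomorphisms.

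For injectivity, suppose $t \in T_p$ represents a class that maps to the trivial coset in $T(p)/Q$. Then $t \in Q$, and so $t \in T_p \cap Q = Q_p$, showing that $t$ represents the identity in $T_p/Q_p$.

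For surjectivity, take an arbitrary coset $tQ \in T(p)/Q$ with $t \in T(p)$. By definition of $T(p)$ we have $tp \in Qp$, so there exists $q \in Q$ with $tp = qp$. Then $q^{-1}t \in T_p$, and since $tQ = (q^{-1}t)Q$, the class $[q^{-1}t] \in T_p/Q_p$ maps to $tQ$. I do not expect any real obstacle here: the lemma is essentially a bookkeeping observation about how the $T$-isotropy of $p$, the $Q$-orbit of $p$, and the $T/Q$-isotropy of $Qp$ fit together, and the proof reduces to the defining relation $T_p \cap Q = Q_p$.
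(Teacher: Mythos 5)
Your proof is correct and matches the paper's argument: the paper states injectivity is obvious (which your observation $Q_p = Q \cap T_p$ makes precise) and establishes surjectivity by exactly the same step, writing $tQ$ as the image of $g^{-1}tQ_p$ for $g \in Q$ with $tp = gp$.
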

 
\begin{proof} The map is obviously injective. To prove surjectivity, observe that
for any $t\in T(p)$ there exists $g\in Q$ such that $tp=gp$, hence $tQ$ is the image of 
$g^{-1}tQ_p$.  
\end{proof}

Denote by  $\t_p$ and  $\t(p)$ the Lie algebras of $T_p$ and $T(p)$ respectively. 
 The differential at the identity of
the group isomorphism mentioned in Lemma \ref{thele}  is a linear isomorphism,
whose inverse is $\varphi_p : \t(p)/\q \to \t_p$.
Note that $\t(p)/\q$ is the Lie algebra of $(T/Q)_{Qp}$.  
Define $\pi_*: \A_T(X)\to \A_{T/Q}(X/Q)$, 
\begin{equation}\label{pistar}\pi_*(A)(Qp):=\varphi_p^*(A(p)), \ {\rm for \ all \ } p \in X.\end{equation}
We need to show that the map $\pi_*(A)$ satisfies the requirement of Definition  \ref{defiggk}. 
 Consider a subtorus of $T/Q$, which is of the form $H/Q$, where
$H$ is a subtorus of $T$ with $Q\subset H$. The fixed points of $H/Q$ in $X/Q$ are orbits $Qp$, with $p\in X$ such that
$Hp = Qp$.  Let $C$ be a connected component of $(X/Q)^{H/Q}$.

\begin{lemma} If $\pi : X \to X/Q$ is the canonical projection, then $\pi^{-1}(C)$ is a connected subspace of $X$. 
\end{lemma}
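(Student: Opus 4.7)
The strategy is to show that any separation of $\pi^{-1}(C)$ into two disjoint nonempty open subsets would descend via $\pi$ to a separation of $C$, contradicting connectedness. I will use only two general facts, neither of which requires the Slice Theorem, local freeness of $Q$, or even the completely regular hypothesis: first, the orbit map $\pi : X \to X/Q$ is open, since for any open $U \subset X$ the saturation $\bigcup_{g \in Q} gU = \pi^{-1}(\pi(U))$ is open, so $\pi(U)$ is open by definition of the quotient topology; second, each fiber $\pi^{-1}(Qp) = Qp$ is connected, being the continuous image of the connected torus $Q$.

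Suppose, for contradiction, $\pi^{-1}(C) = A \sqcup B$ with $A$ and $B$ nonempty and open in $\pi^{-1}(C)$. By fiber connectedness, each $Q$-orbit contained in $\pi^{-1}(C)$ lies entirely in $A$ or entirely in $B$, so both $A$ and $B$ are $Q$-invariant. Write $A = V \cap \pi^{-1}(C)$ with $V \subset X$ open. A direct verification shows $\pi(A) = \pi(V) \cap C$: the inclusion $\subset$ is immediate, and conversely if $y \in \pi(V) \cap C$ then $y = \pi(v)$ for some $v \in V$, and the relation $\pi(v) = y \in C$ places $v$ in $\pi^{-1}(C)$, hence in $A$. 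Since $\pi$ is open, $\pi(V)$ is open in $X/Q$, so $\pi(A)$ is open in $C$, and the same argument applies to $\pi(B)$.

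It remains to observe that $\pi(A) \cap \pi(B) = \emptyset$. If $y$ belonged to both images, there would exist $a \in A$ and $b \in B$ with $Qa = Qb$; but $Q$-invariance of $A$ then forces $b \in A$, contradicting $A \cap B = \emptyset$. Since $\pi(A) \cup \pi(B) = \pi(\pi^{-1}(C)) = C$, we obtain a separation $C = \pi(A) \sqcup \pi(B)$, contradicting connectedness of $C$. The only mildly delicate point in the argument is transferring disjointness through $\pi$ via $Q$-saturation; the rest is a standard combination of openness of $\pi$ with connectedness of its fibers.
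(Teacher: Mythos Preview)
Your proof is correct and follows essentially the same route as the paper's: assume a separation of $\pi^{-1}(C)$, use connectedness of the $Q$-orbits to show the pieces are $Q$-saturated, then push down via the open map $\pi$ to obtain a separation of $C$. You supply more detail than the paper does (in particular the verification that $\pi(A)=\pi(V)\cap C$ and the explicit appeal to openness of $\pi$), but the argument is the same.
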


\begin{proof} Assume that $\pi^{-1}(C)$ is a disjoint union of two non-empty open subspaces
$U_1$ and $U_2$. Both $U_1$ and $U_2$ are $Q$-invariant: if $p\in U_1$, then
$Qp$ is connected and $Qp=(U_1\cap Qp) \cup (U_2\cap Qp)$, the elements of the union being open subspaces of $Qp$. But then $\pi(U_1)$ and $\pi(U_2)$ are disjoint as well. 
Since they are open in $C$, the latter space is not connected, which is a contradiction.
\end{proof}

Let us now denote by $\h$ and $\h_p$ the Lie algebras of $H$ and 
$H_p$, respectively, where $p\in X$. 

\begin{lemma}
The space $\h_p$ is independent of $p\in \pi^{-1}(C)$.
\end{lemma}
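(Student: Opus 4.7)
The plan is to establish that the map $p\mapsto \h_p$ is locally constant on $\pi^{-1}(C)$; combined with the connectedness of $\pi^{-1}(C)$ from the previous lemma, this yields the claim. Two ingredients are needed: a dimension count for points of $\pi^{-1}(C)$, and an upper-semicontinuity argument for isotropy Lie algebras coming from the Slice Theorem.

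For the dimension count, I would first observe that $p\in\pi^{-1}(C)$ means $Qp$ is fixed by $H/Q$, i.e. $Hp=Qp$; in the notation of this section this reads $H(p)=H$. Now apply Lemma 6.1 with $T$ replaced by the subtorus $H$ (the local freeness of $Q$ is preserved when one views $Q$ as a subtorus of $H$), obtaining a group isomorphism $H_p/Q_p\cong H/Q$. Since $Q_p$ is finite, passing to Lie algebras gives $\h_p\cap\q=\{0\}$ and $\dim\h_p=\dim\h-\dim\q$, so in fact $\h=\h_p\oplus\q$.

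The second step is local. Around an arbitrary $p_0\in\pi^{-1}(C)$ the Slice Theorem (valid because $X$ is completely regular and $T$ compact) provides a tube neighborhood of $Tp_0$ homeomorphic to $T\times_{T_{p_0}}S$. For any point $p=[t,s]$ in this tube the $T$-isotropy is $T_p=tT_st^{-1}=T_s$ since $T$ is abelian, and $T_s\subset T_{p_0}$ because $s$ lies in the slice. Intersecting with $H$ then gives $\h_p=\h\cap\t_p\subset\h\cap\t_{p_0}=\h_{p_0}$. If moreover $p\in\pi^{-1}(C)$, the previous step forces $\dim\h_p=\dim\h-\dim\q=\dim\h_{p_0}$, so the inclusion is an equality. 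This proves local constancy, and connectedness of $\pi^{-1}(C)$ finishes the argument.

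I do not anticipate any real obstacle. The dimension count is immediate once one recognizes that Lemma 6.1 transfers verbatim to the subgroup $H$, and the Slice Theorem is available by hypothesis. The only detail warranting a word of caution is confirming that the local freeness assumption needed for Lemma 6.1 still holds when the ambient torus is shrunk from $T$ to $H$; this is automatic, since local freeness of $Q$ on $X$ is a statement about $Q$ alone.
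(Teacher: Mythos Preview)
Your argument is correct and follows essentially the same route as the paper's proof: establish $\dim\h_p=\dim\h-\dim\q$ from $Hp=Qp$, use the Slice Theorem to get $\h_p\subset\h_{p_0}$ for $p$ near $p_0$, and conclude by connectedness of $\pi^{-1}(C)$. The only cosmetic differences are that the paper reads off the dimension equality directly from comparing the orbit dimensions $\dim H/H_p=\dim Q/Q_p$ rather than routing through the isomorphism $H_p/Q_p\cong H/Q$, and invokes the Slice Theorem for the $H$-action directly rather than for the $T$-action followed by intersection with $H$.
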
 

\begin{proof} First, if $p \in \pi^{-1}(C)$, then $Qp=Hp$, thus 
$\dim \h_p = \dim \h - \dim \q$,   which is independent of $p$. 
From the Slice Theorem, see \cite[Ch.~II, Theorem 5.4]{Br},  any $p_0\in \pi^{-1}(C)$
has an open neighborhood where all $H$-isotropy groups are contained in $H_{p_0}$; the Lie algebras of these groups
are therefore all equal to $\h_{p_0}$. Since $\pi^{-1}(C)$ is connected and $\h_p$ is locally constant for $p \in \pi^{-1}(C)$, it is in fact globally  constant.
\end{proof}

Set $\h':=\h_p$, $p\in \pi^{-1}(C)$.  Let $H'$ be the connected subgroup of $T$ whose Lie algebra is $\h'$
(that is, the connected component of $H_p$, with $p$ as above). From the previous lemma, $\pi^{-1}(C)$ is contained in a connected component of $X^{H'}$. 
For any $p\in \pi^{-1}(C)$, the isomorphism $\varphi_p : \t(p)/\q \to \t_p$ maps $\h/\q$ to $\h \cap \t_p =\h'$.  
Since $A(p)|_{\h'}$ is independent of $p$ in the aforementioned component of $X^{H'}$, it follows that $\pi_*(A)(Qp)|_{\h/\q}$ is independent of
$Qp\in C$. 

We now have  a well defined map  $\pi_*:\A_T(X)\to \A_{T/Q}(X/Q)$. Recall that $\A_T(X)$ has a canonical
structure of an $S(\t^*)$-algebra. Via the canonical homomorphism $S((\t/\q)^*)\to S(\t^*)$, $\A_T(X)$ can
also be endowed with a structure of a $S((\t/\q)^*)$-algebra. One can verify that
$\pi_*$ is a homomorphism of $S((\t/\q)^*)$-algebras.

\begin{theorem}\label{thm:isom} The map  $\pi_*:\A_T(X)\to \A_{T/Q}(X/Q)$ is an isomorphism of $S((\t/\q)^*)$-algebras.
\end{theorem}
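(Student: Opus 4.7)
The plan is to produce an explicit two-sided inverse $\pi^{!}:\A_{T/Q}(X/Q)\to \A_T(X)$ defined pointwise by $\pi^{!}(B)(p):=(\varphi_p^*)^{-1}(B(Qp))$. Once this is shown to land in $\A_T(X)$, the identities $\pi_*\circ \pi^{!}=\mathrm{id}$ and $\pi^{!}\circ\pi_*=\mathrm{id}$ follow immediately from the fact that each $\varphi_p$ is an isomorphism, and compatibility with the $S((\t/\q)^*)$-algebra structures follows at once from the fact that each $\varphi_p:\t(p)/\q\to\t_p$ is a linear isomorphism compatible with the projection $\t\to\t/\q$. A small preliminary check is that $\pi^{!}(B)(p)$ depends only on the orbit $Qp$: for $p'=gp$ with $g\in Q$, commutativity of $T$ gives $T_{p'}=T_p$, $Q_{p'}=Q_p$ and $T(p')=T(p)$, hence $\varphi_{p'}=\varphi_p$; moreover $Qp$ is connected and lies in $X^{T_p^0}$, so $B(Qp')=B(Qp)$.

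The substantive part is verifying that $\pi^{!}(B)$ satisfies the condition in Definition \ref{defiggk}. Fix a subtorus $H\subset T$ and a connected component $Y$ of $X^H$. For any $p\in Y$ we have $H\subset T_p$, and since $Q$ acts locally freely the intersection $H\cap Q\subset Q_p$ is finite, hence $\h\cap\q=0$. Consequently $HQ$ is a subtorus of $T$ with Lie algebra $\h\oplus\q$, and $HQ/Q$ is a subtorus of $T/Q$ whose Lie algebra is $(\h+\q)/\q$. Because $T$ is abelian and $H\subset T_p$, one has $HQ\cdot p=Q\cdot H\cdot p=Qp$ for every $p\in Y$, so $\pi(Y)\subset (X/Q)^{HQ/Q}$; by continuity $\pi(Y)$ sits inside a single connected component of the latter fixed-point set. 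An inspection of the definition of $\varphi_p$ shows that $\varphi_p^{-1}$ sends $X\in \h\subset\t_p$ to $X+\q\in(\h+\q)/\q$, a formula independent of $p$. Thus $\pi^{!}(B)(p)|_\h$ is recovered from $B(Qp)|_{(\h+\q)/\q}$ by a $p$-independent isomorphism, and since $B$ is an assignment on $X/Q$ the restriction $B^{(\h+\q)/\q}$ is constant on the relevant component of $(X/Q)^{HQ/Q}$. This forces $\pi^{!}(B)^\h$ to be constant on $Y$, as required.

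The main obstacle is the discrepancy between subtori of $T$ and subtori of $T/Q$: a given $H\subset T$ need not contain $Q$, so one cannot directly transport fixed-point information through $\pi$. The remedy above is to enlarge $H$ to $HQ$, and this relies crucially on local freeness through the identity $\h\cap\q=0$; without it the Lie algebra of $HQ/Q$ would have the wrong dimension and no clean identification between $\pi^{!}(B)(p)|_\h$ and $B(Qp)|_{(\h+\q)/\q}$ would be available. Once the assignment property of $\pi^{!}(B)$ is secured, the remaining checks are routine bookkeeping.
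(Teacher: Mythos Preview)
Your proof is correct and follows essentially the same approach as the paper's: both construct the inverse pointwise via $\psi_p=\varphi_p^{-1}$, enlarge a given subtorus $H\subset T$ to $HQ$ using that local freeness forces $\h\cap\q=0$, observe that $\pi(Y)$ lands in a single component of $(X/Q)^{HQ/Q}$, and use that $\psi_p|_{\h}:\h\to(\h+\q)/\q$ is $p$-independent to transport the locally-constant property of $B$. Your ``preliminary check'' that $\pi^{!}(B)(p)$ depends only on $Qp$ is harmless but redundant (since $Qp'=Qp$ trivially and you have already observed $\varphi_{p'}=\varphi_p$; the remark about $Qp\subset X^{T_p^0}$ is not needed here).
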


\begin{proof}  We show how to construct $\sigma : \A_{T/Q}(X/Q)\to \A_T(X)$ which is inverse to $\pi_*$. 
To this end we first consider for any $p\in X$ the inverse of $\varphi_p$, which is $\psi_p : \t_p \to \t(p)/\q$ 
(the inclusion of $\t_p$ into $\t(p)$ followed by the canonical projection). 
 { If $B \in \A_{T/Q}(X/Q)$, then we define} $$\sigma(B)(p):=\psi_p^*(B(Qp)), \ {\rm for \ all \ } p \in X.$$ We show that $\sigma(B)$ satisfies the requirement of  Definition \ref{defiggk}.  
Take $H'\subset T$ a subtorus with the property that $X^{H'}\neq \emptyset$. 
Then $H'\cap Q$ is a finite group. Set $H:=H'\cdot Q$ and note that its Lie algebra is $\h =\h'\oplus \q$. 
Let $Y$ be a connected component of $X^{H'}$. One can easily see that $\pi(Y)$ is contained in (a
component of) $(X/Q)^{H/Q}$.  For any $p\in Y$ we have  $\h'\subset \t_p$, hence $\psi_p(\h')=\h/\q$.  
Moreover, the map $\psi_p|_{\h'}:\h'\to \h/\q$ is independent of $p$; if we denote this map by
$\psi$, we have
$$\sigma(B)(p)|_{\h'}=\psi^*(B(Qp)|_{\h/\q}).$$ 
Since $B(Qp)|_{\h/\q}$ is constant on any connected component of $(X/Q)^{H/Q}$,
the left-hand side of the previous equation is constant on $Y$. 
At this point we conclude that the map $\sigma : \A_{T/Q}(X/Q)\to \A_T(X)$ is well defined. 
It only remains to observe that $\sigma \circ \pi_*$ and $\pi_*\circ \sigma$ are equal to the identity. 
\end{proof}

\begin{example}
{\rm Consider the action  of $T=S^1\times S^1$ on
$S^3$ described in Example \ref{sphere}. Take $S:=\{(z,z)\mid |z|=1\}$, which is  a subgroup of $T$.
It acts freely on $X$, thus $\A_T(S^3)\simeq \A_{T/S}(S^3/S).$
We have $S^3/S=\bC P^1=S^2$ and the $T/S$-action on it is equivalent to the canonical ``rotation" action of
the circle $S^1$. Along with the presentation of $\A_{S^1}(S^2)$ (see for instance \cite[Example 2.2]{GSZ}),
these identifications lead readily again to the description of $\A_T(S^3)$ given in Example \ref{sphere}. 
} 
\end{example}


\section{Kirwan surjectivity}\label{sec:kirwan}

\subsection{The assignment Kirwan map} The following set-up is mentioned in \cite[Section 8.3]{GSZ}. One considers a symplectic manifold $M$ equipped with a Hamiltonian action of a torus $T$ 
as well as a subtorus  $Q\subset T$, whose Lie
algebra  is $\q\subset \t$. The moment map of the $Q$-action is $\Phi:M\to \q^*$. 
Let $\mu \in \q^*$ be a regular value of this map. Then the action of $Q$ on the pre-image $\Phi^{-1}(\mu)$ is
locally free, hence the symplectic quotient $ M/\! /Q(\mu):=\Phi^{-1}(\mu)/Q$ has a canonical structure of a symplectic orbifold.
It also has a canonical action of the torus $T/Q$. 
One way to obtain information about the equivariant cohomology algebra $H^*_{T/Q}( M/\! /Q(\mu))$ is by
identifying it with $H^*_T(\Phi^{-1}(\mu))$; the inclusion $\Phi^{-1}(\mu) \hookrightarrow M$ induces
the algebra homomorphism
$\kappa: H^*_T(M)\to H^*_T(\Phi^{-1}(\mu))$. This is called the {\it equivariant Kirwan map} and was first studied by Goldin
in \cite{Gol}, { inspired by Kirwan's fundamental work \cite{Ki}}. Relevant for our goal is the surjectivity of this map, which holds under the assumption
that $\Phi$ is a proper map (see \cite[Theorem 1.2]{Gol}, cf.~also \cite{Ki} and \cite{To-We}).

  A natural attempt is to obtain similar results about the assignment algebra of the $T$-action on $M$.
  First,   
 by Theorem \ref{thm:isom}, $\A_T(\Phi^{-1}(\mu))\simeq \A_{T/Q}(\Phi^{-1}(\mu)/Q)$. 
 To complete the analogy with equivariant cohomology, one needs { to understand whether the map} $\kappa^A: \A_T(M) \to \A_T(\Phi^{-1}(\mu))$ 
 is surjective. We call $\kappa^A$ the  {\it assignment Kirwan map}. We first give an example 
 which shows that, in general, $\kappa^A$ is not surjective.

\begin{example}\label{ex:nonsurj} {\rm We consider the action of the torus $T^2$ on $\bC P^3$ given by
$$(e^{2\pi i t_1}, e^{2\pi i t_2}).[z_0:z_1:z_2:z_3]=[z_0: e^{4\pi i t_1}z_1: e^{4\pi i  t_2}z_2: e^{2\pi i (t_1+t_2)}z_3].$$
The canonical identification of $\t^*$ with $\t=\bR^2$ leads to the following description of a moment map:
$$\Phi : \bC P^3\to \bR^2, \quad \Phi( [z_0:z_1:z_2:z_3])=\frac{1}{|z_0|^2+|z_1|^2+|z_2|^2+|z_3|^2}(2|z_1|^2+|z_3|^2, 2|z_2|^2+|z_3|^2).$$
(As usual, $\bC P^3$ is equipped with the Fubini-Study symplectic form.) 
The circle $Q=\{(e^{2\pi i t}, e^{2\pi i t}) \mid t\in \bR\}\subset T^2$ acts on $\bC P^3$ with moment map
\begin{equation}
\label{phiq}\Phi_Q: \bC P^3 \to \bR,\quad \Phi_Q( [z_0:z_1:z_2:z_3])=2\frac{|z_1|^2+|z_2|^2+|z_3|^2}{|z_0|^2+|z_1|^2+|z_2|^2+|z_3|^2}.\end{equation}
The open subspace $U:=\{[1:z_1:z_2:z_3]\mid z_1, z_2, z_3 \in \bC\}\subset \bC P^3$ is $Q$-invariant and the moment map is the restriction
$$\Phi_Q|_U: U \to \bR,\quad \Phi_Q( [1:z_1:z_2:z_3])=2\frac{|z_1|^2+|z_2|^2+|z_3|^2}{1+|z_1|^2+|z_2|^2+|z_3|^2}.$$ 
The pre-image $\Phi_Q^{-1}(1)$ is clearly contained in $U$. 
Concretely, this space is just the unit sphere $S^5$ in $U=\bC^3$. { The induced $T^2$-action on
$S^5$ is given by 
$$(e^{2\pi i t_1}, e^{2\pi i t_2}).(z_1,z_2,z_3)=(e^{4\pi i t_1}z_1, e^{4\pi i  t_2}z_2, e^{2\pi i (t_1+t_2)}z_3),$$
for all $(z_1, z_2, z_3)\in \bC^3$ with $|z_1|^2+|z_2|^2+|z_3|^2=1$. Thus 
the  } assignment algebra  $\A_T(S^5)$ consists of triples $(f_1, f_2, f_3)$, where
\begin{align*}{}&f_1:\{(t_1, t_2)\in \bR^2\mid t_1=0\}\to \bR,\\
{}&f_2:\{(t_1, t_2)\in \bR^2\mid t_2=0\}\to \bR,\\{}&f_3:\{(t_1, t_2)\in \bR^2\mid t_1=-t_2\}\to \bR\end{align*} are polynomial functions with $f_1(0,0)=f_2(0,0)=f_3(0,0)$.
We claim that the restriction map $\A_T(U)\to \A_T(S^5)$ is not surjective
(this implies that also $\kappa^A: \A_T(\bC P^3)\to \A_T(S^5)$ is not surjective, since it factorizes by the map
above). This is because given $(f_1, f_2, f_3) \in \A_T(S^5)$, one cannot always find $f\in \bR[t_1, t_2]$
whose restrictions to the subspaces of equations $t_2=0$, $t_1=0$, and $t_1=t_2$ are $f_1, f_2, f_3$, respectively.
For instance, one can take $f_1(0,t_2)=t_2$, $f_2(t_1, 0)=t_1$, $f_3(t,-t)=t$. 
Assume there exists $f\in \bR[t_1, t_2]$ with the aforementioned properties.
We may assume that $f$ is homogeneous of degree one (otherwise, one can replace it by its degree-one component). This means that $f:\bR^2\to \bR$ is a linear map.
 It must satisfy $f(0,1)=f_1(0,1)=1$, $f(1,0)=f_2(1,0) =1$, $f(1,-1) =f_3(1,-1)=1$.
 This contradicts $f(1,-1) = f(1,0) - f(0,1)$.} \end{example}
  
  { \begin{example}\label{referee} {\rm (We are grateful to the referee for kindly suggesting this example to us.)
   The surjectivity of $\kappa^A$ does hold in the following situation.  Assume $M$ is compact, the $T$-action has finitely many
   fixed points, and the weights of the isotropy representation at any fixed point are any three linearly
   independent. Furthermore, assume that the circle $Q\subset T$ is generic, i.e.~$M^Q=M^T$, and also  that the
   $Q$-action on $\Phi^{-1}(\mu)$ is free. This setting was considered by Guillemin and Zara in \cite{GZ},
   where they proved that under the above assumptions, the $T/Q$-action on $\Phi^{-1}(\mu)/Q$ is GKM,
   see Theorem 1.5.1 in their paper.  By Theorem \ref{gkmassign} and Proposition \ref{only}, each of $M$ and $\Phi^{-1}(\mu)/Q$
   has its assignment ring and equivariant cohomology ring isomorphic to each other. 
   The surjectivity of the assignment Kirwan map then follows from the
surjectivity of the genuine (cohomological) Kirwan map.} 
     \end{example} }

\subsection{A surjectivity criterion} Let $M$ be a compact symplectic manifold acted on by a torus $T$,
the action being Hamiltonian. 
Inspired by Example \ref{phiq}, we make an assumption which 
concerns the weights of the isotropy representation at fixed points.
To formulate it, we first choose a Riemannian metric on $M$ such that $T$ acts isometrically on $M$. Let $F$ be a connected component of $M^T$. For any $p\in F$, the normal space
$\nu_pF$  
 has a complex structure which is preserved by the $T$-action. Let $\alpha_{1,F},\ldots, 
\alpha_{m,F}$ be the
  weights of the $T$-representation on $\nu_pF$ (note that they must not be pairwise distinct).
  The number $m$ is equal to half the codimension of $F$ in $M$ and it may 
  change from a connected component of $M^T$ to the other.  
The corresponding weight space decomposition is $$\nu_pF=\bigoplus_{i=1}^m\bC_{\alpha_{i,F}},$$
where $\bC_{\alpha_{i,F}}$ is a copy of $\bC$ acted on by $T$ with weight $\alpha_{i,F}$. 
We say that two functions $\alpha,\beta\in \t^*$ are equivalent, and denote this by $\alpha\sim \beta$,  
if $\alpha$ 
is a scalar multiple of $\beta$.  
The main result of this section is:

\begin{theorem}\label{surjectiv}
Assume that for any connected component $F$ of $M^T$, the elements of the set   $\{\alpha_{1,F},\ldots,\alpha_{m,F}\}/_\sim$ are linearly independent. 
Then for any circle $Q\subset T$ and any regular value $\mu $ of $\Phi:M \to \q^*$, the map $\kappa^A:\A_T(M)\to \A_T(\Phi^{-1}(\mu))$ is surjective. 
\end{theorem}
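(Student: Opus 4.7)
The plan is a Morse-theoretic argument for the real-valued moment map $\Phi\colon M\to\q^{*}\simeq \bR$ of the $Q$-action, using Mayer--Vietoris (Proposition~\ref{mv}), equivariant homotopy invariance (Proposition~\ref{homotopic}), and a local Chinese Remainder lemma that encodes the weight hypothesis. First I would reduce via Mayer--Vietoris: writing $M=M_{-}\cup M_{+}$ with $M_{+}:=\Phi^{-1}([\mu,\infty))$ and $M_{-}:=\Phi^{-1}((-\infty,\mu])$, one has $M_{-}\cap M_{+}=\Phi^{-1}(\mu)$, so by the closed version of Proposition~\ref{mv} it suffices to show that both restrictions $\A_{T}(M_{\pm})\to \A_{T}(\Phi^{-1}(\mu))$ are surjective; then any $B\in \A_{T}(\Phi^{-1}(\mu))$ admits compatible lifts $A_{\pm}\in \A_{T}(M_{\pm})$ that glue to an element of $\A_{T}(M)$ restricting to $B$. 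Replacing $\Phi$ by $-\Phi$ makes the two cases symmetric, so I focus on $\A_{T}(M_{+})\to \A_{T}(\Phi^{-1}(\mu))$.

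Next I would filter $M_{+}$ by the sublevel sets $\Phi^{-1}([\mu,c])$ as $c$ grows through the Morse--Bott critical values of $\Phi$ in $[\mu,\max\Phi]$. (Recall that the critical set of $\Phi$ is $M^{Q}\supseteq M^{T}$, and by the Hamiltonian structure $\Phi$ is Morse--Bott with even indices.) Between consecutive critical values, equivariant gradient flow provides a $T$-equivariant deformation retract, so Proposition~\ref{homotopic} gives an isomorphism on $\A_{T}$. When crossing a critical value $c$ with critical component $F'\subset M^{Q}$, I would apply Proposition~\ref{mv} to the decomposition $\Phi^{-1}([\mu,c+\epsilon])=\Phi^{-1}([\mu,c-\epsilon])\cup Z$, where $Z$ is a closed $T$-equivariant tubular neighborhood of $F'$ that retracts equivariantly onto $F'$ and whose intersection with $\Phi^{-1}([\mu,c-\epsilon])$ retracts equivariantly onto the negative sphere bundle $S(\nu^{-}F')\to F'$. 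The inductive step then reduces to showing that the bundle-projection pullback
\[
\pi^{*}\colon \A_{T}(F')\longrightarrow \A_{T}(S(\nu^{-}F'))
\]
is surjective for every Morse--Bott critical component $F'$ of $\Phi$.

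Here the weight hypothesis enters. For any $T$-fixed component $F\subset F'\cap M^{T}$ and any $p\in F$, the $T$-weights on the fibre $(\nu^{-}F')_{p}$ are a subset of the $T$-weights on $T_{p}M=\nu_{p}F$, so modulo $\sim$ they remain linearly independent in $\t^{*}$ by assumption. The core local fact is: for any $T$-representation $V$ whose weights modulo $\sim$ are linearly independent, the restriction $\A_{T}(V)\to \A_{T}(V\setminus\{0\})$ is surjective. Indeed, by Proposition~\ref{equivdefi} an assignment on $V\setminus\{0\}$ is a compatible family of polynomials in the quotients $S(\t^{*})/(\beta_{i}: i\in I)$ indexed by nonempty subsets $I$ of the set of weight classes, and since chosen representatives $\beta_{1},\dots,\beta_{l}$ form a regular sequence in $S(\t^{*})$ by linear independence, the Chinese Remainder Theorem lifts every such compatible family to a single polynomial in $S(\t^{*})=\A_{T}(V)$. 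Globalizing over the base $F'$, which is a compact connected Hamiltonian $T$-manifold, hence equivariantly formal and satisfying Assumptions~1 and~2 of Section~\ref{cef}, I would use Proposition~\ref{propres} (Chang--Skjelbred for assignments) to reduce the construction of $A\in \A_{T}(F')$ from its fibrewise CRT-lifts at each $T$-fixed component $F_{j}\subset (F')^{T}$ to a check of the GKM-type compatibility on the 1-skeleton $(F')_{1}$ provided by Theorem~\ref{gkmassign}.

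The hardest step is this final globalization. The fibrewise CRT-lift is highly non-unique---its kernel is the ideal generated by $\beta_{1}\cdots \beta_{l}$---so a priori there is no coherent global lift, and the task is to show that the given assignment on $S(\nu^{-}F')$ constrains the fibrewise lifts sufficiently that the edge compatibility conditions on $(F')_{1}$ can be satisfied simultaneously at all $T$-fixed components of $F'$. Example~\ref{ex:nonsurj} demonstrates that dropping the linear-independence-mod-collinearity hypothesis already breaks the local CRT step, which confirms that this is exactly the right condition to drive both the local extension and the global patching argument.
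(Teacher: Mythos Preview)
Your overall strategy---Morse theory for the $Q$-moment map, reduction via Mayer--Vietoris and equivariant homotopy invariance to the surjectivity of $\A_T(D)\to\A_T(S)$ for the negative disk and sphere bundles over each critical component $C\subset M^Q$, and a Chinese-Remainder--type extension lemma for polynomials on intersections of weight-kernels---is precisely the paper's. The paper uses $f=(\Phi-\mu)^2$ rather than splitting $M=M_-\cup M_+$ along the level $\mu$, but this is an inessential variant, and your local CRT statement is exactly Lemma~\ref{letv}.

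The one genuine gap is the step you yourself flag as hardest: globalizing the fibrewise CRT lifts over $C=F'$. You propose to construct the lift only at the $T$-fixed components $F_j\subset (F')^T$ and then invoke Theorem~\ref{gkmassign}. This requires (a) showing the non-unique CRT lifts $f_j$ can be chosen so that the edge relations $(f_j-f_k)|_{\g}=0$ hold, and (b) checking that the resulting $A\in\A_T(F')$ satisfies $A(p)|_{\t_v}=B(v)$ for \emph{all} $p\in F'$, not only $p\in (F')^T$. You do neither, and (a) is genuinely delicate: the edge hyperplanes $\g$ come from weights \emph{tangent} to $F'$, while the CRT constraints on $f_j$ involve the weights on the normal bundle $E$, and after restriction to $\g$ the latter need no longer be linearly independent modulo collinearity, so your local lemma does not obviously re-apply there. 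The paper avoids this by \emph{not} using the GKM shortcut: instead it inducts over all infinitesimal orbit-type strata $X_1,\dots,X_n$ of $C$ (ordered so that $X_a\subset\overline{X_b}$ implies $b\le a$), constructing at each stage a polynomial $f_a\in S(\k_a^*)$ that simultaneously satisfies the inter-stratum compatibility $f_a|_{\k_b}=f_b$ for $X_a\subset\overline{X_b}$ and the sphere-bundle constraint $f_a|_{\k_{a,i}}=g_{a,i}$ for every weight-kernel $\k_{a,i}=\ker\gamma_{a,i}\subset\k_a$. Each inductive step is one application of Lemma~\ref{letv} with $V=\t$ and the $\beta_i$ equal to the isotropy weights at a $T$-fixed component $F\subset\overline{X_a}$---this is exactly where the linear-independence hypothesis enters---using that every subspace in play ($\k_a$, the $\k_b$, and the $\k_{a,i}$) is an intersection of such weight-kernels. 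The stratum-wise construction directly yields an assignment on $C$ restricting correctly to $S$, with no separate GKM-compatibility check needed.
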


We need a preliminary result.

\begin{lemma}\label{letv} Let $V$ be a real vector space of dimension $n$.
Let also $m$ be an integer with $1\le m \le n$  and $\beta_1, \ldots, \beta_m$ some linearly independent elements
of the dual space $V^*$. Finally, let $V_1,\ldots, V_k$ be subspaces of $V$, each of them of the form
$\ker \beta_{i_1}\cap \cdots \cap \ker \beta_{i_q}$, where $i_1,\ldots, i_q\in \{1,\ldots, m\}$. 
Assume that for each $1\le i \le k$, $f_i$ is a polynomial in $S(V_i^*)$
such that $f_i|_{V_i\cap V_j}=f_j|_{V_i\cap V_j}$, for all $1\le i,j\le k$.  
Then there exists $f\in S(V^*)$ such that $f|_{V_i}=f_i$, for all $1\le i \le k$.
\end{lemma}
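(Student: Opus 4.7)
I would prove the lemma by induction on $m$, the number of linear forms.

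The base case $m=0$ is trivial: every $V_i$ equals $V$, and by the compatibility hypothesis all $f_i$'s coincide. For the inductive step, my first move is a harmless reduction: I enlarge the family $\{V_1,\dots,V_k\}$ to be closed under finite intersections. Since each $V_i\cap V_j$ is of the form $\bigcap_{\ell \in S_i\cup S_j}\ker\beta_\ell$, any intersection already has the prescribed form. For a new subspace $V_i\cap V_j$ I assign the polynomial $f_i|_{V_i\cap V_j}$; the pairwise compatibility hypothesis guarantees that this assignment is well-defined (independent of which two members of the original family one uses to express it) and that pairwise compatibility persists in the enlarged family.

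Next, I split the enlarged family according to whether $V_i\subset \ker\beta_m$ or not, i.e.~whether $m\in S_i$. Let $\mathcal S_1$ and $\mathcal S_0$ denote the two index sets. On $V':=\ker\beta_m$ the restrictions $\beta_1|_{V'},\dots,\beta_{m-1}|_{V'}$ are linearly independent in $(V')^*$ (a nontrivial dependency would force a nontrivial linear relation among $\beta_1,\dots,\beta_m$). The induction hypothesis, applied in $V'$ with these $m-1$ forms and the subfamily $\{V_i : i\in\mathcal S_1\}$ with polynomials $(f_i)_{i\in\mathcal S_1}$, yields $g\in S((V')^*)$ with $g|_{V_i}=f_i$ for every $i\in\mathcal S_1$. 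Choosing a splitting $V=V'\oplus L$, I lift $g$ to $\tilde g\in S(V^*)$ (identifying $S((V')^*)$ with the subalgebra $\bR[\beta_1,\dots,\beta_{m-1},\gamma_{m+1},\dots,\gamma_n]$ in coordinates dual to a basis adapted to $V'\oplus L$).

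Now, for each $i\in\mathcal S_0$, set $h_i:=f_i-\tilde g|_{V_i}\in S(V_i^*)$. The key observation, and the main obstacle the closure under intersections is there to resolve, is that $h_i$ is divisible by $\beta_m|_{V_i}$ in $S(V_i^*)$. Indeed, since we closed under intersections, $V_i\cap\ker\beta_m=V_{i'}$ for some $i'\in\mathcal S_1$, and
\[
f_i|_{V_{i'}}=f_{i'}=g|_{V_{i'}}=\tilde g|_{V_{i'}},
\]
so $h_i$ vanishes on $V_i\cap\ker\beta_m$. Because $m\notin S_i$ and $\beta_1,\dots,\beta_m$ are linearly independent, $\beta_m|_{V_i}$ is a nonzero linear form on $V_i$, hence $h_i=(\beta_m|_{V_i})\,h'_i$ for a unique $h'_i\in S(V_i^*)$.

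Finally, the family $(h'_i)_{i\in\mathcal S_0}$ satisfies the compatibility condition: on $V_i\cap V_j$ (with $i,j\in\mathcal S_0$) the equality $h_i=h_j$ follows from the one for $f_i,f_j$ together with $\tilde g|_{V_i\cap V_j}$ being common, and $\beta_m$ is still nonzero there, so cancellation gives $h'_i|_{V_i\cap V_j}=h'_j|_{V_i\cap V_j}$. The subspaces $\{V_i : i\in\mathcal S_0\}$ are defined by subsets of $\{1,\dots,m-1\}$, so the induction hypothesis (with the $m-1$ forms $\beta_1,\dots,\beta_{m-1}$) produces $h'\in S(V^*)$ with $h'|_{V_i}=h'_i$ for $i\in\mathcal S_0$. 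Setting $f:=\tilde g+\beta_m\,h'$, the verification $f|_{V_i}=f_i$ for $i\in\mathcal S_1$ is immediate (the second summand vanishes on $V_i\subset\ker\beta_m$), and for $i\in\mathcal S_0$ one has $f|_{V_i}=\tilde g|_{V_i}+(\beta_m|_{V_i})h'_i=\tilde g|_{V_i}+h_i=f_i$, completing the induction.
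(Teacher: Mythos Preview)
Your argument is essentially correct and follows a route genuinely different from the paper's. One small imprecision deserves mention: closing the family $\{V_1,\dots,V_k\}$ under intersections of its own members does not, by itself, guarantee that $V_i\cap\ker\beta_m$ lies in the family when $\ker\beta_m$ was not among the original $V_j$'s; your sentence ``since we closed under intersections, $V_i\cap\ker\beta_m=V_{i'}$ for some $i'\in\mathcal S_1$'' therefore needs a word of justification. The fix is immediate: for each $i\in\mathcal S_0$ adjoin the subspace $V_i\cap\ker\beta_m$ with polynomial $f_i|_{V_i\cap\ker\beta_m}$ before running the induction inside $V'$; compatibility of these new members with the old ones follows directly from the original pairwise hypothesis.

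The paper proceeds instead by induction on $n=\dim V$. It first extends the given data to a polynomial $g_{i_1,\ldots,i_q}$ on \emph{every} intersection $\ker\beta_{i_1}\cap\cdots\cap\ker\beta_{i_q}$, working recursively from $q=m$ down to $q=1$ and invoking the inductive hypothesis (on the smaller space) at each step; this produces polynomials $g_1,\ldots,g_m$ on the hyperplanes $\ker\beta_j$. It then writes down an explicit inclusion--exclusion formula in adapted coordinates to glue the $g_j$ into a single $f\in S(V^*)$. Your induction on $m$ is more streamlined and coordinate-free, peeling off one hyperplane at a time via the divisibility trick $h_i=(\beta_m|_{V_i})\,h'_i$ and a second call to the inductive hypothesis. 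The paper's version, in exchange, yields a closed formula for $f$ in terms of the hyperplane data.
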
 

\begin{proof} 
The proof is by induction on $n$. For $n=1$ the statement is trivially true.
It now follows the induction step. For any $1\le q \le m$ and  any  
$i_1, \ldots, i_q \in \{1,\ldots, m\}$ we  construct a polynomial 
$g_{i_1, \ldots, i_q}\in S((\ker \beta_{i_1}\cap \cdots \cap \ker \beta_{i_q})^*)$ such that:
\begin{itemize}
\item if $V_i = \ker \beta_{i_1}\cap \cdots \cap \ker \beta_{i_q}$
then $ g_{i_1, \ldots, i_q}=f_i$;
\item if $\{i'_1,\ldots, i'_r\} \subset \{i_1,\ldots, i_q\}$ then
$g_{i_1,\ldots, i_q} = g_{i'_1,\ldots, i'_r}|_{\ker \beta_{i_1}\cap \cdots \cap \ker \beta_{i_q}}$.
\end{itemize}

 We proceed by recursion. 
First, for $q=m$: the intersection $\ker \beta_1\cap \cdots \cap \ker \beta_m$ is equal to or contained in 
 at least one $V_i$;  we define $g_{1,\ldots, m}$ as the restriction of $f_i$ to
 $\ker \beta_1\cap \cdots \cap \ker \beta_m$. Assume that we have constructed $g$ on all
 intersections of at least $q+1$ kernels. We wish to construct $g_{i_1, \ldots, i_q}$.
 If  $\ker \beta_{i_1}\cap \cdots \cap \ker \beta_{i_q}$ is equal to $V_i$, for some $1\le i \le k$,
 we define $g_{i_1, \ldots, i_q}:=f_i$. Otherwise, we use the induction hypothesis
 to construct $g_{i_1, \ldots, i_q}$ with prescribed values on any intersection of the form
   $\ker \beta_i\cap \ker \beta_{i_1}\cap \cdots \cap \ker \beta_{i_q}$
   (note that the space of all restrictions $\beta_i|_{ \ker \beta_{i_1}\cap \cdots \cap \ker \beta_{i_q}}$
   which are not identically zero consists of linearly independent elements of  
   $(\ker \beta_{i_1}\cap \cdots \cap \ker \beta_{i_q})^*$). 

We end up with polynomials $g_1 \in S((\ker \beta_1)^*),\ldots,
g_m\in S((\ker \beta_m)^*)$ such that if $V_i =  \ker \beta_{i_1}\cap \cdots \cap \ker \beta_{i_q}$ 
is contained in $\ker \beta_j$ then $f_i = g_j |_{V_i}$. 
The goal is  to construct $f\in S(V^*)$ such that
$f|_{\ker \beta_j} = g_j$, for all $1\le j \le m$.

Set $W_j=\ker \beta_j$, $1\le j \le m$.  We can find a basis $w_1,\ldots, w_n$ of $V$ such that $W_j={\rm Span}\{w_1,\ldots, w_{j-1}, w_{j+1},\ldots, w_n\}$,
$1\le j \le m$. If $x_1,\ldots, x_n$ are the coordinates relative to this basis, then $W_j$ is described by
$x_j=0$ and $g_j$ is in $\bR[x_1,\ldots, x_{j-1}, x_{j+1}, \ldots, x_n]$.
For any $J=\{1\le j_1< \ldots < j_k\le m\}$ we denote by $J^c$ its complement in
$\{1,\ldots, n\}$; we also denote by $x_J$ the vector in
$\bR^n$ whose components are 0, except those of index $j_1, \ldots, j_k$, which  are
$x_{j_1},\ldots, x_{j_k}$ respectively. 
Set
$$f:=g_1+\cdots +g_m +\sum_{k\ge 1, I=\{1\le i_1< \ldots < i_k\le m\}}(-1)^k
g_{i_1}(x_{I^c}).$$
As one can easily see, $f(x_1,\ldots, x_{j-1}, 0, x_j,\ldots, x_n) = g_j$,
for all $1\le j \le m$.
\end{proof}

The rest of the subsection is devoted to the actual proof of Theorem \ref{surjectiv}.  
  Put an inner product on $\t$ and  identify $\q^*=\q=\bR$.
We will  use Morse theory for $f:M\to \bR$, $f(p):=(\Phi(p)-\mu)^2$, in the spirit of \cite[Ch.~10]{Ki}.
It is known, see {\it loc.~cit.}, that  $f$ is a    
  minimally degenerate function. The critical set of $f$ consists of $\Phi^{-1}(\mu)$ (minimum set) and 
$M^Q$; the connected components of the latter space are, say, $C_1, \ldots, C_N$,
such that $f(C_1) \le \ldots \le f(C_N)$. These are  critical manifolds of $f$ on $M\setminus \Phi^{-1}(\mu)$, which is a Morse-Bott
function.
 We use the standard notation $M^a:=\{p\in M \mid f(p)\le a\}$.
For any $0 < a <f(C_1)$,  there is a $T$-equivariant deformation retract of $M^a$ onto $\Phi^{-1}(\mu)$, hence, by Proposition  \ref{homotopic},
the restriction map $\A_T(M^a)\to \A_T(\Phi^{-1}(\mu))$ is surjective.

For $i\in \{1,\ldots, r\}$ we now set $C:=C_i$ and  $c:=f(C_i)$. 
To simplify the presentation, we assume that $f^{-1}(c)$ contains no other critical manifold except $C$;
 { the general case, when $f^{-1}(c)$ may contain several critical manifolds, is addressed in   Remark \ref{non-iso} below.}
 We show that for $\epsilon >0$ sufficiently small,
 the map $\A_T(M^{c+\epsilon})\to \A_T(M^{c-\epsilon})$ is surjective. 
 Let $k$ be the index of $C$ as a critical manifold of $f$. 
The negative spaces of the Hessian along $C$ give rise to a vector bundle
$E\to C$, whose rank is $k$, such that $E_q$ is a subspace of $\nu_q C$, for all $q\in C$.    
 By the equivariant Morse-Bott Lemma, { see \cite[Theorem 4.9]{W},}
$M^{c+\epsilon}$ is   $T$-equivariantly 
homotopy equivalent to the space obtained from $M^{c-\epsilon}$ by attaching the (closed) unit disk bundle $D$ in $E$
along its boundary $S$.

\begin{lemma}\label{restr} The restriction map $\A_T(D)\to \A_T(S)$ is surjective.
\end{lemma}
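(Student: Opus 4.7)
First, by Proposition \ref{homotopic} applied to the $T$-equivariant retraction of $D$ onto its zero section $C$, the inclusion $C\hookrightarrow D$ and the bundle retraction $D\to C$ are $T$-equivariant homotopy inverses, so $\A_T(D)\iso\A_T(C)$.  Under this identification the restriction map $\A_T(D)\to\A_T(S)$ becomes the map sending $A'\in\A_T(C)$ to the function $(p,v)\mapsto A'(p)|_{\t_{(p,v)}}$.  It therefore suffices to show that for every $B\in\A_T(S)$ one can find $A'\in\A_T(C)$ with $A'(p)|_{\t_{(p,v)}}=B(p,v)$ for all $(p,v)\in S$.

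I would construct $A'$ through the GKM-type description of $\A_T(C)$ provided by Theorem \ref{gkmassign}.  Its Assumptions 1, 2, 3 are inherited by the $T$-action on $C$ from the Hamiltonian $T$-action on $M$: each connected component of $C^H$ is a component of $M^{Q\cdot H}$, and equivariant formality (which holds for Hamiltonian torus actions on compact symplectic manifolds) together with its consequences propagates to $C$ via Lemma \ref{eqfo}.  Thus $\A_T(C)$ identifies with the subalgebra of $S(\t^*)^n$ (indexed by the connected components $F_1,\ldots,F_n$ of $C^T$) cut out by the GKM edge conditions.  For each $F_r$, I would pick $p\in F_r$ and decompose $E_p=\bigoplus_{i=1}^k L_{i,p}$ into one-dimensional $T$-weight spaces with weights $\beta_{1,p},\ldots,\beta_{k,p}\in\t^*$; since $E_p$ is a $T$-subrepresentation of $\nu_p F_r$, the $\beta_{i,p}$ form a subset of $\{\alpha_{j,F_r}\}$ and are therefore linearly independent modulo collinearity by the hypothesis of Theorem \ref{surjectiv}.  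For each non-empty support pattern $I\subset\{1,\ldots,k\}$, the unit vectors of $E_p$ supported exactly on $\bigoplus_{i\in I}L_{i,p}$ form a connected subset of $S$ whose common $T$-isotropy has Lie algebra $\bigcap_{i\in I}\ker\beta_{i,p}$, so $B$ takes a constant polynomial value $f_{I,p}\in S((\bigcap_{i\in I}\ker\beta_{i,p})^*)$ on it, with the compatibilities $f_{J,p}=f_{I,p}|_{\bigcap_{j\in J}\ker\beta_{j,p}}$ for $I\subset J$ forced by $B$ being an assignment; Lemma \ref{letv} then produces $A'_r\in S(\t^*)$ extending all the $f_{I,p}$.

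The main obstacle is to verify both the GKM edge conditions ``$A'_r|_{\g_i}=A'_s|_{\g_i}$ whenever $F_r,F_s$ share a connected component of $C^{\g_i}$'' and the additional compatibility $A'(p)|_{\t_{(p,v)}}=B(p,v)$ at points $(p,v)\in S$ lying over $C\setminus C^T$.  I expect both to follow from a careful tracking of the constancy of $B$ on the connected components of the relevant fixed sets in $S$: over a component $Y$ of $C^{\g_i}$, the bundle $S|_Y$ contains a $\g_i$-fixed stratum on which $B|_{\g_i}$ is constant and which meets both $S|_{F_r}$ and $S|_{F_s}$, forcing the edge condition; and, for a general $p\in C$, any $(p,v)\in S$ lies in the same orbit-type stratum of $S$ as a point over $C^T$ reached by moving along an isotropy-preserving path in the bundle, on which $B$ is constant in the sense required.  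This last step, transferring the pointwise identification at $C^T$ to all of $C$, is the most delicate part of the argument and relies crucially on the Chang-Skjelbred phenomenon (Proposition \ref{propres}) for the $T$-action on $C$.
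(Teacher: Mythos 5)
Your opening reduction is exactly the paper's: use Proposition \ref{homotopic} to identify $\A_T(D)\iso\A_T(C)$, then construct an assignment on $C$ matching $B$ on $S$. The paper also relies on Lemma \ref{letv} as the main algebraic engine. But the route you propose through Theorem \ref{gkmassign} has a genuine gap at the step you yourself flag as ``the most delicate part.''

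The problem is that the data you feed into Lemma \ref{letv} at a component $F_r$ of $C^T$ do not determine the restrictions $A'_r|_{\g_i}$ needed for the GKM edge conditions. You constrain $A'_r$ only on the subspaces $\bigcap_{i\in I}\ker\beta_{i,p}$, where the $\beta_{i,p}$ are weights of $E_p$, a $T$-subrepresentation of $\nu_p^M C$. But a codimension-one isotropy subalgebra $\g_i$ of the $T$-action on $C$ through $F_r$ is the kernel of a weight of $\nu_p^C F_r$, a complementary piece of $\nu_p^M F_r$. Generically these two sets of weights are disjoint (even modulo collinearity), so none of your constraint subspaces equals $\g_i$ or even lies inside it, and the polynomial $A'_r$ produced by Lemma \ref{letv} is essentially unconstrained on $\g_i$. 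There is then no reason for $A'_r|_{\g_i}=A'_s|_{\g_i}$ to hold, and the observation that ``$B$ is constant on a suitable connected subset of $S$'' only tells you what the common value \emph{should} be, not that the output of Lemma \ref{letv} at $F_r$ and $F_s$ will agree there. To build that agreement in you would have to impose $A'_r|_{\g_i}=A'_s|_{\g_i}$ as an \emph{additional} input to Lemma \ref{letv}, but $A'_s$ is not yet constructed; this circularity is precisely what forces an inductive scheme.

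That inductive scheme is what the paper's proof supplies and yours lacks. Rather than reducing to $C^T$ via Theorem \ref{gkmassign}, the paper stratifies $C$ by infinitesimal orbit type ($X_1,\ldots,X_n$ with isotropy algebras $\k_1,\ldots,\k_n$, ordered so that $X_a\subset\overline{X}_b$ forces $b\le a$) and builds $f_a\in S(\k_a^*)$ stratum by stratum. At each step Lemma \ref{letv} is applied with $V=\t$ and the $\beta$'s taken to be the \emph{full} set of isotropy weights along $\nu^M F$ for some $F\subset\overline{X}_a$ (so both the $C$-normal and the $E$-normal weights appear), with constraint subspaces $\k_b$ (for $X_a\subset\overline{X}_b$, already handled) and $\k_{a,i}=\ker\gamma_{a,i}$ (prescribed by $B$). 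The three compatibility conditions required by Lemma \ref{letv} are then checked explicitly, using connectedness of the relevant weight subbundles of $S$ and the already-constructed $f_b$. This is where the cross-stratum consistency — the analogue of your GKM edge condition — actually gets proved, and it is a nontrivial geometric argument, not merely bookkeeping. Your proposal would need to replicate this inductive compatibility argument to close the gap; once you do, you are essentially running the paper's proof, and the detour through Theorem \ref{gkmassign} no longer buys you anything.
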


\begin{proof} Take $A\in \A_T(S)$. 
By Proposition \ref{homotopic}, $\A_T(D)\simeq \A_T(C)$, hence our surjectivity statement
amounts to showing that there exists $B\in \A_T(C)$ such that
for any $p\in C$ and any $v$ in the fiber $S_p$ one has
$A(v)=B(p)|_{\t_v}$. To this end, we consider the infinitesimal stratification of $C$,
whose elements are $X_1, \ldots, X_n$, with isotropy algebras $\k_1,\ldots, \k_n$ respectively,
such that if $X_a\subset \overline{X}_b$ then $b\le a$.  
For $a\in \{1,\ldots, n\}$, the weights of the (isotropy) $\k_a$-representation
on $T_pM$ are independent of $p\in X_a$.  This representation leaves both
$T_pC$ and $E_p$ invariant. Denote the weights of the $\k_a$-representation on $E_p$
by $\gamma_{a,1},\ldots, \gamma_{a, \ell}$, where $\ell$ is half the rank of $E$. 
If $F$ is a connected component of $M^T$ which is contained in $\overline{X}_a$,
the functions  $\gamma_{a,1},\ldots, \gamma_{a, \ell}$ are restrictions to $\k_a$ of certain weights
of the isotropy representation  along  $\nu F$ (more precisely, the weights of the $T$-representation
on $E_q\subset \nu_qF$, where $q\in F$). Consider
$$\k_{a,1}:=\ker \gamma_{a,1},\ldots, \k_{a, \ell}:=\ker \gamma_{a,\ell},$$
which are subspaces of $\k_a$. Note that the functions $\gamma_{a,1},\ldots, \gamma_{a, \ell}$
may be pairwise proportional or even equal and consequently the spaces above are not necessarily distinct. For any $i\in \{1,\ldots, \ell\}$ the  spaces
$\{v\in E_p \mid x.v=\gamma_{a,i}(x)v, \ {\rm for \ all \  } x\in \k_a\}$ with $p\in X_a$
give rise to a splitting of $E|_{X_a}$ as a direct sum of  $T$-equivariant subbundles.
 The vectors in the intersection of this
subbundle with  $S$  form a connected subspace of  $S$  where the
infinitesimal isotropy is  $\k_{a,i}$; hence these vectors are all mapped
to the same polynomial  $g_{a,i} \in S(\k_{a,i}^*)$. 
 The idea is to use induction on $a\in \{1,\ldots, n\}$ to construct $f_a \in S(\k_a^*)$ such that:
\begin{itemize}
\item[(i)] if $X_a\subset \overline{X}_b$ then 
$f_a|_{\k_b}=f_b$;
\item[(ii)]  $f_a|_{\k_{a,i}} =g_{a,i}$.
\end{itemize} 

(After performing this construction, 
we define $B$ as the map which assigns to each stratum $X_a$ the polynomial $f_a$.)

Let us first take $a=1$. The corresponding $X_1$ is the regular stratum of the action.
Only condition (ii) needs to be satisfied.
To justify that 
$f_1\in S(\k_1^*)$  with these properties exists, pick $F$ a component of $C^T$.
As already pointed out, 
$\gamma_{1,1},\ldots, \gamma_{1,\ell}$ are restrictions to $\k_1$ of 
some weights of the $T$-representation
along $\nu F$. But $\k_1$ is an intersection of kernels of  weights of the  same representation,
hence $\k_{1,1},\ldots, \k_{1,\ell}$ are of the same type. 
 One applies  Lemma \ref{letv} for $V=\t$ and 
$\beta_1,\ldots, \beta_m$ the  weights of the $T$-representation along $\nu F$ (modulo the equivalence relation mentioned in Theorem \ref{surjectiv}, these weights are linearly independent).  
One  uses that $A$ is an assignment on $S$. It follows that there exists a polynomial
in $S(\t^*)$ whose restriction to $\k_{1,i}$ is $g_{1,i}$, for all $i=1,\ldots, \ell$.
By restricting this polynomial to $\k_1$ one obtains the desired $f_1$.

It now follows the induction step. That is,  assuming that 
 $f_1,\ldots, f_{a-1}$ are known,  we show how to construct $f_a$. 
 First note that if $X_a \subset \overline{X}_b$ and $a\neq b$ then $b<a$ and hence $f_b$ is known.
Pick $F$ a connected component of $M^T$ contained in $\overline{X}_a$. Then $\k_a$, $\k_{a,1},\ldots, \k_{a,\ell}$ 
are all subspaces of $\t$ that can be obtained by intersecting kernels of  weights
of the $T$-representation along $\nu F$; the same can be said about $\k_b$, whenever $X_a \subset \overline{X}_b$, since $F$ is
then contained in $\overline{X}_b$. One uses again Lemma \ref{letv}. The compatibility
conditions that need to be checked  are of three types:

\begin{itemize}
\item[1.] if $X_a \subset \overline{X}_b$ and $X_a \subset \overline{X}_{b'}$
then $f_b|_{\k_b\cap \k_{b'}} = f_{b'}|_{\k_b\cap \k_{b'}}$;
\item[2.] if $X_a \subset \overline{X}_b$ and $i\in \{1,\ldots, \ell\}$, then
$f_{b}|_{\k_b\cap \k_{a,i}}=g_{a,i}|_{\k_b\cap \k_{a,i}}$;
\item[3.] if $i,i'\in \{1,\ldots, \ell\}$ then 
$g_{a,i}|_{\k_{a,i}\cap \k_{a,i'}} = g_{a,i'}|_{\k_{a,i}\cap \k_{a,i'}}$.
\end{itemize} 
To justify 1, pick again $F$ a connected component of $M^T$ contained in $\overline{X}_a$.
Pick $q\in F$ and consider the weight space decomposition of $\nu_q F$ (the normal space 
to $F$ in $C$). 
Then $\k_b\cap \k_{b'}$ is an infinitesimal isotropy of vectors/points in $\nu_qF$
that are also in a tubular neighbourhood
of $F$ in $C$. Moreover, this Lie algebra is the infinitesimal isotropy of 
a stratum, say $X_c$,  whose closure  contains $q$, as well as points in $X_b$
and points in $X_{b'}$. 
Thus $X_b \subset \overline{X}_c$ and similarly $X_{b'}\subset \overline{X}_c$.
By the induction hypothesis, both $f_b|_{\k_b\cap \k_{b'}}$ and
$f_{b'}|_{\k_b\cap \k_{b'}}$ are equal to $f_c$.
For 2, one takes into account that for any $p\in \overline{X}_b$,
the $\k_b$-representation on $E_p$ has the same weights.
If $p\in X_a$,  these weights are the restrictions to $\k_b$ of the weights of the $\k_a$-representation on $E_p$, which are $\gamma_{a,1},\ldots,\gamma_{a,\ell}$. The 
kernels of the restrictions are just $\k_b\cap \k_{a,i}$,
where $1\le i \le \ell$. 
The connected component of $C^{\k_b}$ which contains $X_b$ is a submanifold of
$C$.  For any $p$ in this submanifold one considers 
$\{v\in E_p \mid x.v=\gamma_{a,i}(x)v, 
{\rm \ for \ all \ } x\in  \k_b\}$ and obtains in this way a vector bundle.
Take $v$ in the intersection of $S$ with the fiber over $p$
and $v'$ in the intersection of $S$ with the fiber over $p'$,
where $p\in X_a$ and $p'\in X_b$. One can join $p$ and $p'$ by a 
path in $C^{\k_b}$, then one can lift it and get a path from  
$v$ to $v'$ in the vector bundle intersected with $S$. Since $A$ is an assignment on $S$,
the image of $v'$ under $A$ is  $g_{a,i}|_{\k_b\cap \k_{a,i}}$. 
Property 2 now follows from the induction hypothesis. As about 3, it  is a direct consequence of the fact that $A$ is an assignment on $S$. By Lemma \ref{letv}, conditions 1, 2 and 3
 imply that there exists a
polynomial in $S(\t^*)$ which satisfies  conditions (i) and (ii) above. 
One defines $f_a$ as the restriction to $\k_a$ of this polynomial. 
\end{proof}

Theorem \ref{surjectiv} now follows from the following lemma.

\begin{lemma} The restriction map $\A_T(M^{c+\epsilon})\to \A_T(M^{c-\epsilon})$ is surjective. \end{lemma}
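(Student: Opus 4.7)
The plan is to combine the Mayer--Vietoris property of Proposition \ref{mv} with the homotopy invariance of Proposition \ref{homotopic} and the surjectivity already established in Lemma \ref{restr}. By the equivariant Morse--Bott lemma, $M^{c+\epsilon}$ is $T$-equivariantly homotopy equivalent to $M^{c-\epsilon}\cup_{S} D$, the space obtained by gluing the closed disk bundle $D$ to $M^{c-\epsilon}$ along its boundary sphere bundle $S$. Applying Proposition \ref{homotopic} to a pair of mutually inverse equivariant homotopy equivalences, I obtain an $S(\t^*)$-algebra isomorphism $\A_T(M^{c+\epsilon})\cong\A_T(M^{c-\epsilon}\cup D)$ that commutes with restriction to $M^{c-\epsilon}$. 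Thus it suffices to show that the restriction map $\A_T(M^{c-\epsilon}\cup D)\to\A_T(M^{c-\epsilon})$ is surjective.

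Inside $X:=M^{c-\epsilon}\cup D$, the subsets $Y:=M^{c-\epsilon}$ and $Z:=D$ are closed and $T$-invariant, with $Y\cap Z=S$ and $Y\cup Z=X$. Proposition \ref{mv} therefore yields the exact sequence
\begin{equation*}
0\longrightarrow \A_T(X)\longrightarrow \A_T(M^{c-\epsilon})\oplus \A_T(D)\longrightarrow \A_T(S),
\end{equation*}
where the second arrow is $(A',B)\mapsto A'|_S-B|_S$.

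Now pick an arbitrary $A\in\A_T(M^{c-\epsilon})$ and consider its restriction $A|_S\in\A_T(S)$. By Lemma \ref{restr}, the map $\A_T(D)\to\A_T(S)$ is surjective, so there exists $B\in\A_T(D)$ with $B|_S=A|_S$. Then the pair $(A,B)$ lies in the kernel of $\A_T(M^{c-\epsilon})\oplus\A_T(D)\to\A_T(S)$, and exactness produces a unique $\tilde A\in\A_T(X)$ with $\tilde A|_{M^{c-\epsilon}}=A$ and $\tilde A|_D=B$. Transporting $\tilde A$ through the isomorphism $\A_T(X)\cong\A_T(M^{c+\epsilon})$ gives the required preimage of $A$.

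The only point that requires some care is the equivariant Morse--Bott identification of $M^{c+\epsilon}$ with the closed-closed union $M^{c-\epsilon}\cup_{S}D$ (so that Proposition \ref{mv} actually applies); once this is in place, the argument is a direct assembly of the three previously proved properties. Iterating this step across the finitely many critical levels $f(C_1)\le\cdots\le f(C_N)$, starting from the surjectivity of $\A_T(M^a)\to\A_T(\Phi^{-1}(\mu))$ for $0<a<f(C_1)$ noted earlier, yields the surjectivity of $\kappa^A$ asserted in Theorem \ref{surjectiv}.
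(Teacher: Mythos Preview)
Your proof is correct and follows essentially the same approach as the paper: identify $M^{c+\epsilon}$ with $M^{c-\epsilon}\cup_S D$, then apply the Mayer--Vietoris exact sequence of Proposition \ref{mv} to the closed pair $(M^{c-\epsilon},D)$ together with the surjectivity of $\A_T(D)\to\A_T(S)$ from Lemma \ref{restr}. The paper's proof is a one-line sketch of precisely this argument; you have simply spelled out the diagram chase and the role of Proposition \ref{homotopic} in passing between $M^{c+\epsilon}$ and the glued space.
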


\begin{proof} 
We identify $M^{c+\epsilon} =M^{c-\epsilon} \cup_S D$. The result follows readily from the Mayer-Vietoris sequence (see Proposition \ref{mv}) 
for the spaces $M^{c-\epsilon}$ and $D$, which are closed in $M^{c+\epsilon}$ and whose intersection is $S$. 
\end{proof}

{ \begin{remark}\label{non-iso} {\rm The assumption that $f^{-1}(c)$ contains only one critical manifold has been
used in the proof above. By dropping it, no essential changes are necessary. Indeed, if the critical manifolds in
$f^{-1}(c)$ are $C_{i_1},\ldots, C_{i_r}$, then, by the equivariant Morse-Bott Lemma (see \cite[Theorem 4.9]{W}),
  $M^{c+\epsilon}$ is obtained from $M^{c-\epsilon}$ by attaching some disk bundles $D_1,\ldots, D_r$ over
   $C_{i_1},\ldots, C_{i_r}$ along their boundaries $S_1, \ldots, S_r$. Like in Lemma \ref{restr}, the map
   ${\mathcal A}_T(D_i) \to {\mathcal A}_T(S_i)$ is surjective, for all $1\le i \le r$. One uses again an obvious Mayer-Vietoris argument.}   
\end{remark} } 

\begin{example} {\rm { As a direct application of Theorem \ref{surjectiv} one can show that} if $M$ equipped with a $T$-action is a toric manifold, then for any circle $Q\subset T$, the assignment Kirwan map corresponding to any  regular value $\mu$ of the 
$Q$-moment map is surjective. { In the case when $Q$ acts freely on the preimage $\Phi^{-1}(\mu)$, this is just a special case
of Example \ref{referee}.}  }
\end{example}

\begin{example} {\rm Let $T$ be an $n$-dimensional torus. Pick some  weights $\alpha_1,\ldots,\alpha_m \in \t_{\bZ}^*$
and consider first the induced actions of $T$ on $\bC P^1$, then the induced diagonal action on
$\bC P^1 \times\cdots \times \bC P^1$ ($m$ factors). The action has $2^m$ fixed points, which are $m$-tuples of the type $(p_{\pm}, \ldots, p_\pm)$, where  
$p_+=[1:0]$ and $p_-=[0:1]$.   The corresponding isotropy weights at any such point are $\pm \alpha_1, \ldots,\pm\alpha_m$.  
If the elements of $\{\alpha_1,\ldots, \alpha_m\}/_\sim$ are linearly independent, then for any circle $Q\subset T$ and
any regular value $\mu$ of the moment map $\Phi_Q$, 
the assignment moment map $\A_T(\bC P^1 \times\cdots \times \bC P^1)\to \A_T(\Phi_Q^{-1}(\mu))$ is surjective. This happens for instance if the weights are all equal,
i.e.~$\alpha_1=\ldots = \alpha_m$. Note that in this case, the rings 
$H^*_T(\bC P^1 \times\cdots \times \bC P^1)$ and $\A_T(\bC P^1 \times\cdots \times \bC P^1)$ are
not isomorphic, cf.~e.g.~\cite[Example 7.4]{GSZ}. Thus, unlike the previous example, the surjectivity of the assignment Kirwan  map is not a direct consequence of the surjectivity of the genuine Kirwan map.
}
\end{example}

\begin{remark} {\rm In Theorem \ref{surjectiv} it is essential to
make the linearly independence assumption along {\it all } connected components of
$M^T$. If one takes for instance Example \ref{ex:nonsurj}, the isotropy weights at
$[0:1:0:0]$ are $-2t_1, 2(t_2-t_1),$ and $t_2-t_1$. They are linearly independent modulo the equivalence
relation in Theorem \ref{surjectiv}. Nonetheless, we have seen that the corresponding 
$\kappa^A$ is not surjective.  { As expected, there is at least one other fixed point 
where the  assumption is not satisfied: for example, at $[1:0:0:0]$ the weights are $2t_1, 2t_2,$ and $t_1+t_2$.}
 }
 \end{remark}

\subsection{The kernel of the Kirwan map} As before,  $M$ is a compact symplectic manifold 
equipped with a Hamiltonian action of a torus $T$. 
Let $Q\subset T$ be again a circle.  Recall the identification 
$\q^* = \bR$, made by means of an inner product on $\t$. Let $\Phi : M\to \bR$ be the  moment map of the $Q$-action.  
 Under the assumption that $0$
 is a regular value of the latter map, we describe the kernel of $\kappa^A:\A_T(M) \to \A_T(\Phi^{-1}(0))$.
 Our description  is similar in spirit to the one given by Tolman and Weitsman
 \cite{To-We}  
 in the context of equivariant cohomology. 
 
 \begin{theorem}\label{kern} 
 If $0$ is a regular value of $\Phi : M \to \bR$, then the kernel of 
 $\kappa^A:\A_T(M) \to \A_T(\Phi^{-1}(0))$ is equal to the direct sum
 ${\mathcal K}^+\oplus {\mathcal K}^-$, where 
$ {\mathcal K}^{\pm}$ consist of all $A\in \A_T(M)$ with the property that
$A(F)=0$ for all connected components $F$ of $M^T$ with $\Phi(F)>0$
(resp.~$\Phi(F)<0$).
 \end{theorem}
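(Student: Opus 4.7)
My plan splits into three steps: the sum $\mathcal{K}^+ + \mathcal{K}^-$ is direct, each summand lies in $\ker\kappa^A$, and every element of $\ker\kappa^A$ decomposes this way.

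For directness, note that $M^T\subseteq M^Q$ consists of critical points of $\Phi$, so the regularity of $0$ forces $\Phi(F)\neq 0$ for every connected component $F$ of $M^T$. Any $A\in\mathcal{K}^+\cap\mathcal{K}^-$ therefore vanishes on all of $M^T$; since the Hamiltonian action on the compact symplectic $M$ is equivariantly formal (so Assumption 2 holds by Section \ref{cef}), Proposition \ref{propres} gives $A=0$. The reverse inclusion $\ker\kappa^A\subseteq \mathcal{K}^++\mathcal{K}^-$ is equally quick: given $A\in\ker\kappa^A$, apply the Mayer--Vietoris sequence of Proposition \ref{mv} to the closed $T$-invariant subsets $M^+=\Phi^{-1}([0,\infty))$ and $M^-=\Phi^{-1}((-\infty,0])$, with $M^+\cup M^-=M$ and $M^+\cap M^-=\Phi^{-1}(0)$. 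Since $A|_{\Phi^{-1}(0)}=0$, the pair $(A|_{M^+},0)$ has matching restrictions on the overlap and hence glues to an assignment $A^+\in\A_T(M)$ with $A^+|_{M^+}=A|_{M^+}$ and $A^+|_{M^-}=0$; symmetrically we obtain $A^-$. By construction $A^+\in\mathcal{K}^-$, $A^-\in\mathcal{K}^+$, and $A=A^++A^-$.

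The bulk of the proof is $\mathcal{K}^+\subseteq\ker\kappa^A$; the case of $\mathcal{K}^-$ is symmetric. Fix $A\in\mathcal{K}^+$ and let $0<c_{r+1}<\cdots<c_s$ be the positive critical values of the Morse--Bott function $\Phi$, with critical manifold $C_j\subseteq M^Q$ at level $c_j$. I would prove by downward induction on $j$ that $A$ vanishes on $\Phi^{-1}([c_j-\epsilon,\infty))$. Between consecutive critical values, the $T$-equivariant gradient flow of $\Phi$ yields a $T$-equivariant deformation retraction of $\Phi^{-1}([c_j+\epsilon,\infty))$ onto $\Phi^{-1}([c_{j+1}-\epsilon,\infty))$, so the inductive hypothesis transfers by Proposition \ref{homotopic}. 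To cross the critical value $c_j$, the equivariant Morse--Bott lemma (cf.~\cite[Theorem 4.9]{W}, used exactly as in the proof of Theorem \ref{surjectiv}) identifies $\Phi^{-1}([c_j-\epsilon,\infty))$ up to $T$-equivariant homotopy with $\Phi^{-1}([c_j+\epsilon,\infty))$ glued along a sphere bundle $S$ to a $T$-equivariant disk bundle $D\to C_j$. Each connected component of $C_j$ carries a Hamiltonian $T/Q$-action, hence is equivariantly formal, so Proposition \ref{propres} applies to $C_j$; since $A$ vanishes on $C_j\cap M^T$ (fixed components at the positive level $c_j$), it vanishes on all of $C_j$, and thus on $D$ by Proposition \ref{homotopic}. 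The Mayer--Vietoris sequence of Proposition \ref{mv} for $\Phi^{-1}([c_j+\epsilon,\infty))\cup D$ then forces $A$ to vanish on $\Phi^{-1}([c_j-\epsilon,\infty))$, closing the induction.

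After the induction, $A$ vanishes on $\Phi^{-1}([\delta,\infty))$ for every $\delta>0$, hence on $\Phi^{-1}((0,\infty))$. To descend to $\Phi^{-1}(0)$, fix $p\in\Phi^{-1}(0)$ and let $H$ be the identity component of $T_p$, with Lie algebra $\h=\t_p$. Since $\Phi$ is $T$-invariant, $\nabla\Phi(p)$ is tangent to $M^H$, and regularity of $0$ makes it nonzero, so the component of $M^H$ through $p$ contains points with $\Phi>0$ arbitrarily close to $p$. The compatibility in Definition \ref{defiggk} then forces $A(p)|_\h=A(p)=0$. The principal obstacle I expect is the equivariant bookkeeping in the Morse--Bott handle attachment, but this technology is already in place from the proof of Theorem \ref{surjectiv}, and the key structural point allowing the inductive step is that Chang--Skjelbred descends to each critical manifold $C_j$ because the residual $T/Q$-action on $C_j$ is Hamiltonian.
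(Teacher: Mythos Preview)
Your proof is correct, and both halves take a genuinely different route from the paper. For the inclusion $\ker\kappa^A \subseteq \mathcal{K}^+ + \mathcal{K}^-$, your Mayer--Vietoris gluing along the closed halves $M^\pm = \Phi^{-1}([0,\infty))$ and $\Phi^{-1}((-\infty,0])$ is slicker than the paper's argument: the paper defines $A_\pm$ only on the fixed components and then invokes the GKM presentation (Theorem~\ref{gkmassign}) to verify that these prescriptions extend to global assignments, whereas your approach bypasses Theorem~\ref{gkmassign} entirely and works for any equivariant homotopy functor with a Mayer--Vietoris sequence. For the inclusion $\mathcal{K}^+ \subseteq \ker\kappa^A$, on the other hand, the paper is considerably more direct: given $q \in \Phi^{-1}(0)$ with identity isotropy component $G$, the component $C$ of $M^G$ through $q$ is a compact $T$-Hamiltonian symplectic submanifold whose $Q$-moment image is an interval $[a,b]$ with $a<0<b$, and convexity of the $T$-moment polytope of $C$ produces a $T$-fixed point $p\in C$ with $\Phi(p)=b>0$, so that $A(q)=A(p)|_{\g}=0$ immediately. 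Your Morse induction establishes the stronger intermediate fact that $A$ vanishes on all of $\Phi^{-1}((0,\infty))$, which is true but unnecessary; your final step, flowing along $\nabla\Phi$ inside $M^H$ to reach a point with $\Phi>0$, is essentially the paper's idea applied locally rather than globally (the paper goes all the way to the maximum of $\Phi|_C$ to land on a $T$-fixed point, avoiding the need for the Morse machinery).
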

 
 \begin{proof} We first show that if $A\in {\mathcal K}^+$ then $A(q)=0$ for all $q\in \Phi^{-1}(0)$.
 To this end, let $G$ denote the identity component of the isotropy group $T_q$
 and let $C$ be the connected component of $q$ in $M^G$. This is a $T$-invariant symplectic
 submanifold of $M$. The map $\Phi|_C$ is not constant, since 
 the action of $Q$ on $C$ is non-trivial (recall that the action of $Q$ on $\Phi^{-1}(0)$
 is locally free and $q\in \Phi^{-1}(0)$).  Observe now that $0$ is in the interior of the line segment $\Phi(C)$: otherwise $q$ would be an extremal point of $\Phi|_C$, hence a critical point,
 which is impossible, since $q$ is not $Q$-fixed (again because the action of $Q$
 on $\Phi^{-1}(0)$ is locally free). That is, $\Phi(C)$ is an interval
 $[a, b]$, where $a<0<b$. We claim that $\Phi^{-1}(b)\cap C$ contains points that are $T$-fixed.
 (The reason is that $\Phi(C)$ is obtained from the image of $C$ under  $\Phi_T:C\to \t^*$
 by projecting it orthogonally onto the line $\q^*$;
 but $\Phi_T(C)$ is a polytope whose vertices are of the form $\Phi_T(F)$,
 where $F$ is a connected component of $C^T$.)  Thus there exists $p\in C^T$ with $\Phi(p) =b>0$.
 We have $A(p)=0$ and hence  $A(q)=A(p)|_{{\rm Lie}(G)}=0$.
 
 Similarly, if $A\in {\mathcal K}^-$ then $A(q)=0$ for all $q\in \Phi^{-1}(0)$.
 We have proved that ${\mathcal K}_+\oplus {\mathcal K}_-\subset \ker \kappa^A$.
 
 The next goal is to prove the other inclusion. Take  $A$  in $\A_T(M)$ 
 whose restriction to $\Phi^{-1}(0)$ is identically $0$. 
 Consider the map $A_-$ on the set of all connected components of $M^T$ 
 with values in $S(\t^*)$ given by
 $$A_-(F) :=
 \begin{cases}
 0, \ {\rm if \ } \Phi(F)<0 \\
 A(F), \ {\rm if \ } \Phi(F) >0.
 \end{cases}
 $$ 
We show that $A_-$ extends to an assignment on $M$. 
By Theorem \ref{gkmassign}, we need to check that if $\g \subset \t$ is a
codimension-one isotropy subalgebra and $F_1$, $F_2$ 
are connected components
 of $M^T$ contained in the same connected component of $M^\g$, 
then $A_-(F_1)-A_-(F_2)$ vanishes on $\g$. This is certainly true if $\Phi(F_1)$ and
$\Phi(F_2)$ have the same sign. Let us now assume that $\Phi(F_1)<0<\Phi(F_2)$. 
The connected component of $M^\g$ mentioned above contains at least one point $q$ with
$\Phi(q)=0$. Since $q$ is not $T$-fixed, the isotropy algebra $\t_q$ is equal to $\g$.
We thus have  $A_-(F_2)|_\g=A(F_2)|_\g = A(q)=0$, which shows that
$A_-(F_1)$ is equal to $A_-(F_2)$ on $\g$.

 Similarly, take the map $A_+$ on the set of all connected components of $M^T$ 
 with values in $S(\t^*)$, given by
 $$A_+(F) :=
 \begin{cases}
 A(F), \ {\rm if \ } \Phi(F) <0\\
 0, \ {\rm if \ } \Phi(F)>0.
 \end{cases}
 $$ 
 In the same way as before, $A_+$ extends to an assignment on $M$. 
 We obviously have $A=A_++A_-$, $A_+\in {\mathcal K}^+$, and $A_-\in {\mathcal K}^-$.  
\end{proof}

\begin{example}
{\rm The building stone of our example is the ``rotation" action of $S^1$ on
 the sphere $S^2=\bC P^1$, which is
$$z.[z_1:z_2]:=[zz_1:z_2], \quad z\in S^1, [z_1:z_2]\in \bC P^1.$$
To describe a moment map it will be convenient to identify $\bC P^1$ with the
unit 2-sphere in $\bR^3$: the height function $h:\bC P^1\to \bR$ is a moment map.
The critical points are $q_+:=[1:0]$ and $q_-:=[0:1]$, the North pole and the South pole
on the sphere; that is, $h(q_+)=1$ and $h(q_-)=-1$.       
The actual example we will be looking at is the action of $T^2=S^1\times S^1$ on 
 $M:=\bC P^1\times \bC P^1 \times \bC P^1$ given by $$(z_1, z_2).(q_1, q_2, q_3) :=(z_1.q_1, z_1.q_2, z_2.q_3).$$ Let $\t^2=\bR \times \bR$ be the Lie algebra of $T^2$. A  moment map of the above action is
 $M \to \bR \times \bR$, 
 $(q_1, q_2, q_3) \mapsto (h(q_1)+h(q_2), h(q_3))$.
 Inside $T^2$ we choose the diagonal circle $\Delta (S^1) =\{(z,z)\mid z\in S^1\}$.
 By restriction to this subgroup one obtains the diagonal action of $S^1$ on $\bC P^1\times \bC P^1 \times \bC P^1$,
 whose  moment map  is $\Phi :  M \to \bR $, $\Phi(q_1,q_2,q_3)=h(q_1)+h(q_2)+h(q_3)$. 
 The critical points are the $S^1$-fixed points, that is, $(q_{\pm}, q_{\pm}, q_{\pm})$,
 eight points altogether. Consequently, the singular values are 
 $-3, -1, 1,$ and $3$. In particular, $0$ is a regular value. Denote by 
 $M_0=\Phi^{-1}(0)/\Delta(S^1)$ the symplectic quotient at $0$ and 
 set $T_0:=T^2/\Delta(S^1)$. 
 By the method described in
 this section we can calculate  $\A_{T_0}(M_0)$, as follows. First, 
 Theorem \ref{gkmassign} allows us to describe $\A_{T^2}(M)$.
 Concretely, $M^{T^2}$ consists again of the eight points $(q_{\pm}, q_{\pm}, q_{\pm})$.
 We label them as follows:
 \begin{align*}{}& p_1:=(q_-,q_-,q_-), \  p_2:=(q_+,q_-,q_-), \  p_3:=(q_-,q_+,q_-), \  p_4:=(q_-,q_-,q_+),\\
{} & p_5:=(q_+,q_+,q_-), \ p_6:=(q_+,q_-,q_+), \ p_7:=(q_-,q_+,q_+), \  p_8:=(q_+,q_+,q_+).\end{align*}
  The one-codimensional isotropies of the $T^2$-action are $\{e\}\times S^1$
 and $S^1\times \{e\}$. The fixed point sets of these two subgroups are
 $\bC P^1 \times \bC P^1 \times \{q_{\pm}\}$ and $\{q_\pm\} \times \{q_{\pm}\} \times 
 \bC P^1$.  By identifying $S((\t^2)^*)=\bR[u_1,u_2]$, we deduce that 
$\A_{T^2}(M)$ consists of all $8$-tuples $(f_1, \ldots, f_8)$ where $f_i  \in \A_{T^2}(\{p_i\}) = \bR[u_1, u_2]$, $1\le i \le 8$, 
such that:
\begin{align*} {}& f_1 - f_4, \ f_2-f_6, \ f_3 - f_7, \ {\rm and } \ f_5 - f_8 \ {\rm are \ divisible \ by} \ u_1;\\
{}& f_1 - f_2, \ f_1 - f_3, \ f_1 - f_5, \ f_4 - f_6, \ f_4 - f_7, \ {\rm and} \  f_4 - f_8  \ {\rm are \ divisible \ by} \ u_2. 
\end{align*}
A basis of $\A_{T^2}(M)$ over $\bR[u_1, u_2]$ consists of:
\begin{align*}
{}& A_1:= (1, \ 1,\ 1, \ 1,\ 1, \ 1,\ 1,\ 1);\\
{}& A_2:= (0, \ u_2, \ 0, \ 0, \ 0, \ u_2, \ 0, \ 0);\\
{}& A_3:= (0, \ 0, \ u_2,  \ 0, \ 0, \ 0, \ u_2,  \ 0);\\
{}& A_4:= (0, \ 0, \ 0, \ u_1, \ 0, \ u_1, \ u_1, \ u_1);\\
{}& A_5:= (0, \ 0, \ 0, \ 0, \ u_2, \ 0, \ 0, u_2);\\
{}& A_6:= (0, \ 0, \ 0, \ 0, \ 0, \ u_1u_2, \  0, \ 0);\\
{}& A_7:= (0, \ 0, \ 0, \ 0, \ 0,  \ 0, \ u_1u_2, \ 0);\\
{}& A_8:= (0, \ 0, \ 0, \ 0, \ 0,  \ 0, \ 0, \ u_1u_2).   
\end{align*}
 The weights of the $T^2$ isotropy action at any fixed point, regarded as vectors of
 $\t^2$, are $(\pm 1, 0), (\pm 1, 0), $ and $(0, \pm 1)$. Thus the hypotheses of
 Theorem \ref{surjectiv} are fulfilled. 
Notice now that $\Phi$ maps $p_1, p_2, p_3,$ and $p_4$ to positive numbers and 
$p_5, p_6, p_7$, and $p_8$ to negative numbers. 
Consequently, $K^+$ is spanned over $\bR[u_1, u_2]$ by:
 \begin{align*}
{}& B_1:= (u_2, \ 0,\ 0, \ u_2,\ 0, \ 0,\ 0,\ 0);\\
{}& B_2:= (0, \ u_1u_2, \ 0, \ 0, \ 0, \ 0, \ 0, \ 0);\\
{}& B_3:= (0, \ 0, \ u_1u_2,  \ 0, \ 0, \ 0, \ 0,  \ 0);\\
{}& B_4:= (0, \ 0, \ 0, \ u_1u_2, \ 0, \ 0, \ 0, \ 0);   
\end{align*}
whereas $K^-$ is spanned by $A_5, A_6, A_7$, and $A_8$.
Theorems \ref{surjectiv} and \ref{kern} imply that  $$\A_{T_0}(M_0)\simeq \A_T(M)/({\mathcal K}^+\oplus {\mathcal K}^-),$$
by an isomorphism of $S(\t_0^*)$-algebras.  Recall that $\t_0=(\bR \oplus \bR)/\Delta(\bR)$, hence
$S(\t_0^*)$ can be identified with the subring $\bR[u_1-u_2]$ of $\bR[u_1, u_2]$. 
In conclusion, $$\A_{T_0}(M_0)\simeq {\rm Span}_{\bR[u_1,u_2]}(A_1, \ldots, A_8)/{\rm Span}_{\bR[u_1,u_2]}(B_1, \ldots, B_4, A_5,\ldots, A_8),$$
the quotient in the right hand side being regarded as an $\bR[u_1-u_2]$-algebra.
A basis of $\A_{T_0}(M_0)$ as a module over $\bR[u_1-u_2]$ consists of the classes of $A_1, A_2, A_3,$ and  $A_4$.
They are a generating system since for any $i=1,2,3,4$, both $u_1A_i$ and $u_2A_i$ are linear combinations of
$(u_1-u_2)A_1$, $(u_1-u_2)A_2$, $(u_1-u_2)A_3$, $(u_1-u_2)A_4$, $B_1, B_2, B_3, B_4, A_5, A_6, A_7$, and $A_8$. 
They are also linearly independent. The details are left to the reader.

In fact, one can directly see that the canonical action of $S^1 \times S^1 \times S^1$ on $\bC P^1 \times \bC P^1 \times 
\bC P^1$ is Hamiltonian. Hence if $\Delta'(S^1):=\{(z,z,z) \mid z\in S^1\}$,
then the residual action on $M_0$ of the 2-torus $S^1\times S^1\times S^1 / \Delta'(S^1)$ is Hamiltonian as well.
The latter group contains  $T_0$ as a subgroup. By looking at the moment polytopes of the 2-torus 
and of $T_0$, one can actually see
 that  the fixed point set of the  $T_0$-action  
 has four connected components. Thus  $\A_{T_0}(M_0)$ is a free module of rank  $4$
 over $S(\t_0^*)$ also as a direct consequence of Proposition \ref{propres}, Corollary \ref{t-act}, and the fact that
 $S(\t_0^*)$ is a PID. Moreover, by Theorem 5.4, if one identifies
$S(\t_0^*)$  with $\bR[u]$, then  $\A_{T_0}(M_0)$ is isomorphic as an $\bR[u]$-algebra to the space of all $4$-tuples 
$(g_1, g_2, g_3, g_4)$ with  $g_i \in \bR[u]$  such that $g_1(0) = g_2(0) = g_3(0) = g_4(0)$.  
}
\end{example}


\begin{thebibliography}{BBBB}


\bibitem{All} C.~Allday, {\it Cohomological aspects of torus actions}, Toric Topology. International Conference,
Osaka 2006, pp.~29-36.

\bibitem{AP} C.~Allday and V.~Puppe, {\it Cohomological Methods in Transformation Groups}, Cambridge studies in advanced mathematics,
Vol.~32, Cambridge University Press, 1993.  


\bibitem{Br} G.~E.~Bredon, {\it Introduction to Compact Transformation Groups}, Pure and Applied Mathematics, Vol.~46, Academic Press, New York, 1972.

\bibitem{Ca} J.~Carlson, {\it  Equivariant formality of isotropic torus action}, preprint 2014.

\bibitem{CS} T.~Chang and T.~Skjelbred, {\it The topological Schur lemma and related results,}
Ann.~Math.~{\bf 100} (1974), 307-321.

\bibitem{Fr-Pu} M.~Franz and V.~Puppe, {\it Exact cohomology sequences with integral coefficients for torus actions}, 
Transform.~Groups {\bf 12} (2007), 65-76.

{ \bibitem{FP} M.~Franz and V.~Puppe, {\it Exact sequences for equivariantly formal spaces},
C.~R.~Math.~Acad.~Sci.~Soc.~R.~Can.~{\bf 33} (2011),  1-10. }

\bibitem{GGK} V.~Ginzburg, V.~Guillemin, and Y.~Karshon, {\it Assignments and abstract moment maps}, J.~Differential Geom.~{\bf 52} (1999), 259-301.





\bibitem{Go-Ma} O.~Goertsches and A.-L.~Mare, {\it Non-abelian GKM theory}, Math.~Z.~{\bf 277} (2014), 1-27.

{ \bibitem{GW} O.~Goertsches and M.~Wiemeler, {\it Positively curved GKM-manifolds}, Int.~Math.~Res.~Not.~{\bf 22} (2015), 12015-12041.}


   \bibitem{Gol} R.~Goldin, {\it An effective algorithm
   for the cohomology ring of symplectic reductions},
   {\em Geom.~and Func.~Anal.} {\bf 12} (2002), 567-583.



\bibitem{GKM} R.~Goresky, Kottwitz, and MacPherson, {\it Equivariant cohomology, Koszul duality,
and the localization theorem}, Invent.~math.~{\bf 131} (1998), 25-83.

 



{ \bibitem{GGK1}  V.~Guillemin, V.~Ginzburg, and Y.~Karshon, {\it Moment Maps, Cobordisms, and Hamiltonian Group Actions}, 
Mathematical Surveys and Monographs, Vol.~96, Amer.~Math.~Soc.,
Providence, RI, 2002.}


\bibitem{GSZ} V.~Guillemin, S.~Sabatini, and C.~Zara, {\it Polynomial assignments}, Indag.~Math.~{\bf 25} (2014), 992-1018.


\bibitem{GS} V.~Guillemin and S.~Sternberg, {\it Supersymmetry and Equivariant de Rham Theory}, Springer-Verlag, Berlin 1999.


{ \bibitem{GZ} V.~Guillemin and C.~Zara, {\it 1-skeleta, Betti numbers, and equivariant cohomology}, Duke Math.~J.~{\bf 107}
 (2001), 283-349. }





\bibitem{Hs} W.-Y.~Hsiang, {\it Cohomology Theory and Topological Transformation Groups},
 Ergebnisse der
Mathematik und ihrer Grenzgebiete, Vol.~85, Springer-Verlag, New-York 1975.

\bibitem{Ki}  F.~C.~Kirwan, {\em Cohomology of Quotients in
   Complex and Algebraic Geometry},  Mathematical Notes, Vol.~31,
   Princeton University Press, Princeton N.~J., 1984.
   
 \bibitem{To-We}  S.~Tolman and J.~Weitsman, {\it The
   cohomology rings of  symplectic quotients},
   { Comm.~Anal.~Geom.} {\bf 11} (2003), 751-773.
   
   \bibitem{Ty} J.~Tymoczko, {\it An introduction to equivariant cohomology and homology, following
Goresky, Kottwitz, and MacPherson}, Snowbird Lectures in Algebraic
Geometry,  Contemp.~Math., Vol.~388, Amer.~Math.~Soc., Providence,
RI, 2005, pp.~169-188.

\bibitem{W} {A.~Wasserman, {\it Equivariant differential topology}, Topology {\bf 8} (1969), 127-150}.

\end{thebibliography}
\end{document}